\documentclass[preprint,12pt]{elsarticle}

\usepackage{amssymb}
\usepackage{amsmath}
\usepackage{graphicx}%
\usepackage{multirow}%
\usepackage{amsmath,amssymb,amsfonts}%
\usepackage{amsthm}%
\usepackage{mathrsfs}%
\usepackage[title]{appendix}%
\usepackage{xcolor}%
\usepackage{textcomp}%
\usepackage{manyfoot}%
\usepackage{booktabs}%
\usepackage{algorithm}%
\usepackage{algorithmicx}%
\usepackage{algpseudocode}%
\usepackage{listings}%
\usepackage{physics}
\usepackage{hyperref}
\usepackage{booktabs}
\usepackage{graphicx}
\usepackage[a4paper,margin=1in]{geometry}
\usepackage{lmodern}
\usepackage{microtype}

\numberwithin{equation}{section}

\theoremstyle{plain}
\newtheorem{theorem}{Theorem}[section]
\newtheorem{lemma}[theorem]{lemma}
  
\newtheorem{assumption}[theorem]{assumption}    
\newtheorem{corollary}[theorem]{corollary}

\theoremstyle{remark}
\newtheorem{remark}[theorem]{Remark}

\begin{document}

\begin{frontmatter}

\title{The Energy Based Near Singularity for Fourier Spectral 3D Navier-Stokes Equations}

\author{Beibei Li}

\begin{abstract}
We investigate the three-dimensional incompressible Navier-Stokes equations. The equations are discretized with Fourier spectral method and a fourth-order Runge-Kutta scheme in time. 
The spectral accuracy, resolution conditions, and an energy based conditional regularity framework are established analytically. Then we prove exponential convergence, algebraic convergence, and an a posteriori criterion that links numerical blowup to loss of regularity.
This work develops a suite of diagnostics for detecting potential finite time singular behavior.
\end{abstract}

\begin{keyword}

\end{keyword}

\end{frontmatter}

\section{Introduction}
The three-dimensional incompressible Navier-Stokes equations are central to both fluid dynamics and the theory of partial differential equations. Since Leray established the existence of finite energy weak solutions and the energy inequality~\cite{Leray1934}, the global regularity in three dimensions has been clearly framed.

The classical milestones by Prodi, Serrin, and Ladyzhenskaya relate integrability conditions on the velocity field to smoothness and uniqueness of solutions~\cite{Prodi1959,Serrin1962,Ladyzhenskaya1967}. The works of Fujita-Kato and Kato developed local and global well posedness theories for small initial data in critical type settings and shaped much of the modern functional-analytic framework for Navier-Stokes~\cite{FujitaKato1964,Kato1984}. The well posedness was later obtained in~\cite{KochTataru2001}. The partial regularity theory identifies a potentially singular set of suitable weak solutions~\cite{CaffarelliKohnNirenberg1982}, while borderline criteria such as the $L_t^\infty L_x^{3}$ condition further constrain possible blowup scenarios~\cite{EscauriazaSereginSverak2003}.  The Beale-Kato-Majda criterion remains a key mechanism~\cite{BealeKatoMajda1984}. The axisymmetric configurations, including flows without swirl, provide additional settings in which global regularity or conditional regularity can be established and compared to the fully general 3D problem~\cite{UkhoviskiiYudovich1968}. The recent convex integration results demonstrate nonuniqueness for finite energy weak solutions~\cite{BuckmasterVicol2019}, highlighting limitations of energy-based selection and motivating finer structural understanding.

There is a comprehensive foundation for both the analytical and numerical study of the Navier-Stokes equations, including the works of Foias, Temam, and Leray’s original construction of weak solutions and energy inequalities~\cite{TemamNS,TemamNF,Leray1934,RobinsonNSReg,BerselliEnergyIneq}. The multiscale nature and its effective coarse grained description fit naturally within the general framework of multiscale modeling developed by E~\cite{e2011principles}. The refined regularity results such as Gevrey class and analyticity estimates further quantify the smoothing properties of Navier-Stokes dynamics~\cite{FoiasTemam1989,KukavicaVicol2002}, while a posteriori regularity criteria based on numerical computations connect rigorous analysis with high resolution simulations~\cite{RobinsonNSReg}. The spectral methods and their variants on the numerical side are systematically developed in the standard references~\cite{CanutoHussainiQuarteroniZang1988,GottliebOrszag1977,Trefethen2000}, with time discretization and stability issues analyzed in detail in, for example,~\cite{GottliebTurkel1980,HairerNorsettWanner1993}.

Tao’s averaged Navier-Stokes model provides an example of finite time blowup~\cite{Tao2016_JaMS_averagedNSBlowup}. The recent work establishes finite time singularities for the 3D Euler equations in a strong function class for blowup mechanisms~\cite{CordobaMartinezZoroaZheng2025_annPDE_EulerBlowup}. Hou investigates potentially singular behaviour for the 3D Navier-Stokes equations using high accuracy computations with rigorous error control~\cite{Hou2023_NavierStokes_PotentiallySingular_FoCM} and reports nearly self similar concentration in generalized axisymmetric Navier-Stokes and Boussinesq models~\cite{Hou2024_NearlySelfSimilar_gaxisymNS_Boussinesq_arXiv}.

We have a forcing for Navier-Stokes and conduct a computation guided study using Fourier spectral method with RK4 time stepping, tracking critical diagnostics, including the growth of the velocity, vorticity, a BKM type integral proxy, and energy dissipation consistency to distinguish genuine singular behavior from numerical artefacts and to probe potential blowup. 
In parallel, we develop an abstract framework for spectral accuracy, resolution conditions, and energy based conditional regularity, which we later specialize to Navier-Stokes, allowing us to interpret the  numerical breakdown within a controlled analytic setting.

\section{The Fourier Spectral for three dimensional Navier-Stokes equations}
The incompressible equations are
\begin{eqnarray}
&\frac{\partial \mathbf{u}}{\partial t} + \mathbf{u}\cdot\nabla\mathbf{u} 
= -\nabla p + \nu \nabla^2 \mathbf{u} + \mathbf{f}(\mathbf{x},t), \\
&\nabla \cdot \mathbf{u} = 0.
\end{eqnarray}

We consider the three dimensional incompressible equations
\begin{eqnarray}
\frac{\partial{u}}{\partial t} +  u \frac{\partial u}{\partial x}+v\frac{\partial u}{\partial y}+w\frac{\partial u}{\partial z}=\nu(\frac{\partial^2 u}{\partial x^2}+\frac{\partial^2 u}{\partial y^2}+\frac{\partial^2 u}{\partial z^2}) -\frac{\partial p}{\partial x} + f_x(x,y,z,t)\\
\frac{\partial{v}}{\partial t} +  u \frac{\partial v}{\partial x}+v\frac{\partial v}{\partial y}+w\frac{\partial v}{\partial z}=\nu(\frac{\partial^2 v}{\partial x^2}+\frac{\partial^2 v}{\partial y^2}+\frac{\partial^2 v}{\partial z^2}) -\frac{\partial p}{\partial y}+f_y(x,y,z,t)\\
\frac{\partial{w}}{\partial t} +  u \frac{\partial w}{\partial x}+v\frac{\partial w}{\partial y}+w\frac{\partial w}{\partial z}=\nu(\frac{\partial^2 w}{\partial x^2}+\frac{\partial^2 w}{\partial y^2}+\frac{\partial^2 w}{\partial z^2}) -\frac{\partial p}{\partial z}+f_z(x,y,z,t)
\end{eqnarray}
where $\mathbf{u}=(u,v,w)$ is the velocity field, $\mathbf{x}=(x,y,z)$, $p$ is the pressure, $\nu$ is the kinematic viscosity, and the incompressible is
\begin{eqnarray}
\frac{\partial u}{\partial x} + \frac{\partial v}{\partial y}+ \frac{\partial w}{\partial z}= 0
\end{eqnarray}

\subsubsection*{The Fourier Transform equations}
The Navier-Stokes equation becomes
\begin{eqnarray}
\partial_t \widehat{\mathbf{u}}
= \widehat{N} - \nu K^2\,\widehat{\mathbf{u}}+\hat{\mathbf{f}}(\mathbf{x},t)(\mathbf{k},t)
- i\,\mathbf{k}\,\widehat{p}(\mathbf{k},t).
\end{eqnarray}

In Fourier space, incompressibility reads
\begin{eqnarray}
i\,\mathbf{k}\cdot\widehat{\mathbf{u}} = 0.
\end{eqnarray}

\section{The Blowup Numerical Results}
We solve the Navier-Stokes equations in Fourier space using a fourth-order Runge-Kutta scheme with adaptive time step changed by CFL. The computation with resolution $258*258*258$ for a particularly concentrated initial condition and forcing goes up at $t=0.0002$. 
The maximum velocity grows to $\mathcal{O}(10^{5})$ and the kinetic energy to $\mathcal{O}(10^{7})$ in Figure~\ref{bkme}, while the vorticity maximum reaches $\mathcal{O}(10^{7})$ and the BKM integral exceeds $\mathcal{O}(10^{3})$ in Figure~\ref{Blowupsdiagnostics}, after which several fields go to infinity numerically. 
The numerical surrogate of the Beale-Kato-Majda necessary condition which we find that the accumulated integral of $\|\omega(\cdot,t)\|_\infty$ is already very large, strongly accelerating, and essentially monotone near the terminal time, which is consistent with divergence of the BKM integral. The energy balance and dissipation remain consistent within resolution limits. Taken together, these diagnostics indicate that for this choice of initial data and forcing, the numerical solution develops behavior consistent with a finite time singularity.

\begin{figure}[h]
\centering
\includegraphics[width=\linewidth]{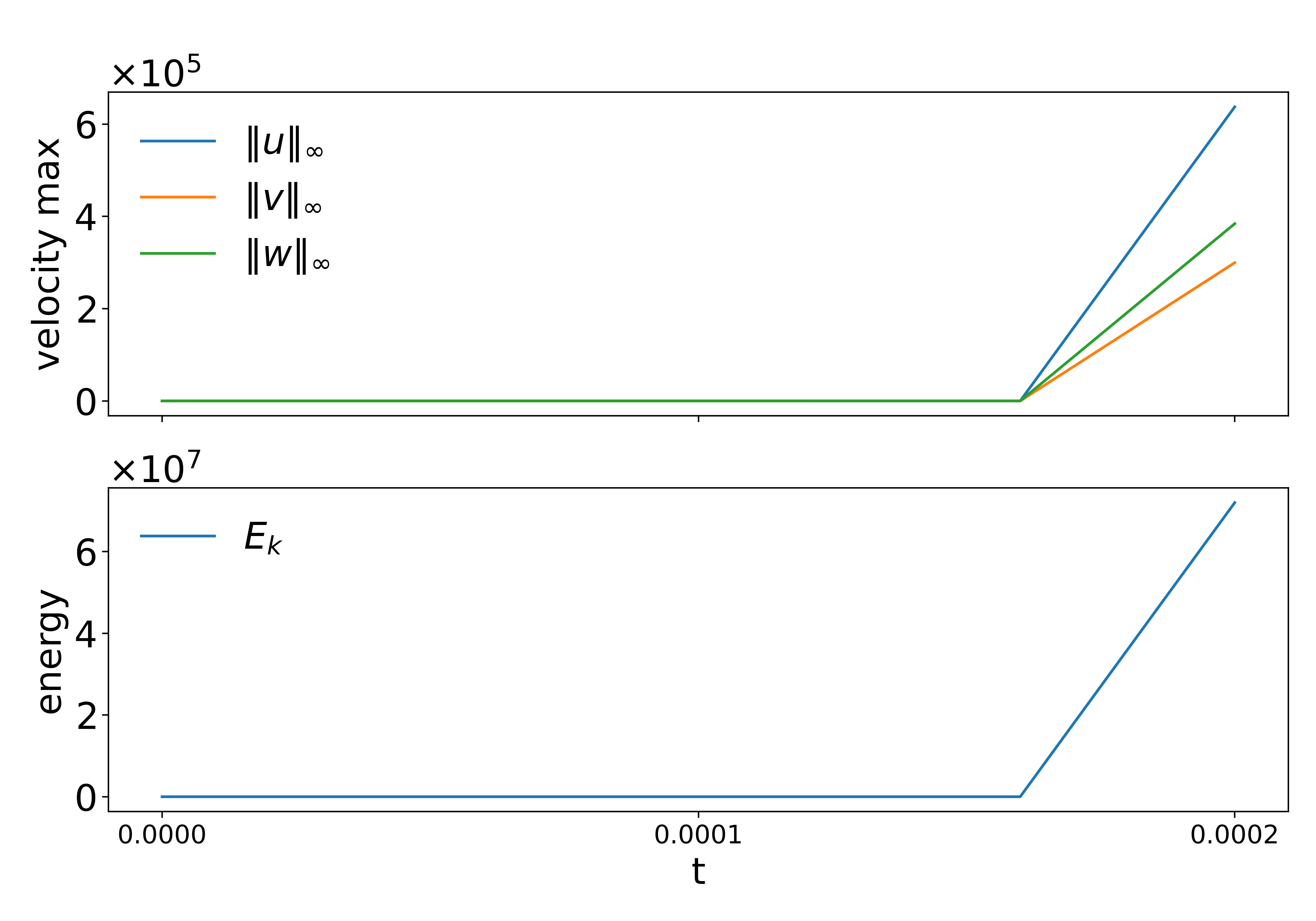}
\caption{The max velocity and energy}
\label{bkme}
\end{figure}

\begin{figure}[h]
\centering
\includegraphics[width=\linewidth]{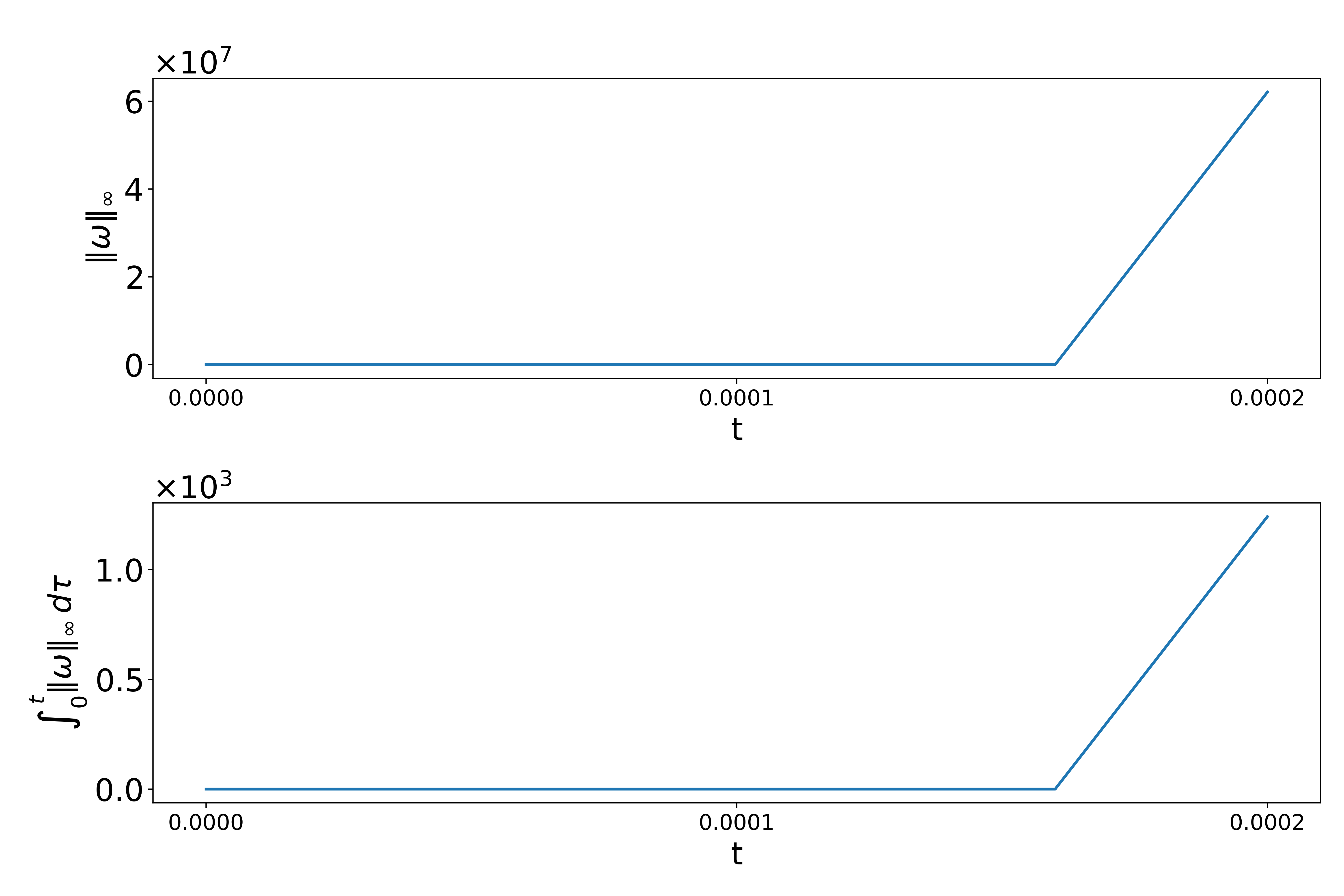}
\caption{The vorticity and BKM integral}
\label{Blowupsdiagnostics}
\end{figure}

\section{Spectral accuracy, resolution conditions, and energy based conditional regularity}
The goal in this section is two. We establish spectral convergence and algebraic convergence  for the fully discrete approximation. We formulate an energy-based conditional regularity framework that connects numerical breakdown with possible loss of regularity of the PDE.

We consider abstract evolution equation
\begin{equation}\label{eq:PDE}
  \partial_t u = \nu \Delta u + N(u),
  \qquad x\in\Omega:=[0,2\pi]^3,\quad t\in[0,T],
\end{equation}
where $u:\Omega\times[0,T]\to\mathbb{R}^m$ is a vector field,
$\nu>0$ is the viscosity, and $N(u)$ denotes a quadratic nonlinearity
e.g.\ the Navier-Stokes nonlinearity composed with the Leray projector.

We expand the $u$ in Fourier series
\[
  u(x,t) = \sum_{k\in\mathbb{Z}^3} \widehat{u}_k(t)\,e^{ik\cdot x},
  \qquad
  \widehat{u}_k(t)
  = \frac{1}{(2\pi)^3}\int_{[0,2\pi]^3} u(x,t)\,e^{-ik\cdot x}\,dx.
\]

\subsection{The analyticity strip and the exponential decay}
The implication is classical \cite{CanutoHussainiQuarteroniZang1988, GottliebOrszag1977}. In the Navier-Stokes the analyticity and regularity results showing exponential decay of the spectrum can be found in~\cite{FoiasTemam1989, KukavicaVicol2002}.
\begin{assumption}[analyticity strip]\label{ass:a}
The map $x\mapsto u(x,t)$ for each $t\in[0,T]$ admits a holomorphic 
extension to the complex strip
\[
  S_{\delta(t)} := \bigl\{
    z\in\mathbb{C}^3 : |\Im z_j|<\delta(t),\ j=1,2,3
  \bigr\},
\]
and there exists a function $M:[0,T]\to(0,\infty)$ such that
\[
  \|u(\cdot,t)\|_{L^\infty(S_{\delta(t)})} \le M(t)
  \quad\text{for all } t\in[0,T].
\]
There exists $\delta_{\min}>0$ such that
\[
  \delta(t)\ge\delta_{\min},\qquad \forall\,t\in[0,T].
\]
We set
\[
  C^*(T) := \sup_{0\le s\le T} \|u(\cdot,s)\|_{L^\infty(S_{\delta(s)})}.
\]
\end{assumption}

\begin{lemma}[analytic strip $\Rightarrow$ exponential decay]\label{lem:analytic-decay}
We then have for all $t\in[0,T]$ and all $k\in\mathbb{Z}^3$,
\begin{equation}\label{eq:3d-exp-decay}
  |\widehat{u}_k(t)| \le C^*(T)\,e^{-\delta_{\min}|k|},
  \qquad |k| := (k_1^2+k_2^2+k_3^2)^{1/2}.
\end{equation}
\end{lemma}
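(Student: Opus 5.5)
The plan is the classical contour-shift (Paley--Wiener type) argument, carried out one coordinate at a time. Fix $t\in[0,T]$ and $k\in\mathbb{Z}^3$, and fix any $\delta'$ with $0<\delta'<\delta(t)$. For each $j$ set $\sigma_j:=-\operatorname{sign}(k_j)\in\{-1,0,1\}$ and $y:=\delta'(\sigma_1,\sigma_2,\sigma_3)$. I will show that
\[
  \widehat{u}_k(t)=\frac{1}{(2\pi)^3}\int_{[0,2\pi]^3} u(x+iy,t)\,e^{-ik\cdot(x+iy)}\,dx .
\]
This identity is proved one variable at a time. For fixed real values of the other two variables, the function $z_j\mapsto u(\dots,z_j,\dots,t)e^{-ik_jz_j}$ is holomorphic on the closed rectangle with vertices $0$, $2\pi$, $2\pi+i\sigma_j\delta'$, $i\sigma_j\delta'$. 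This rectangle lies in the strip because $\delta'<\delta(t)$.

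Cauchy's theorem on this rectangle gives the shift in the $j$-th variable. The two vertical sides cancel: $u$ is $2\pi$-periodic in $x_j$ on the real axis, so its holomorphic extension is $2\pi$-periodic in $\Re z_j$ by the identity theorem, and $e^{-ik_jz_j}$ is also $2\pi$-periodic since $k_j\in\mathbb{Z}$. Applying this successively for $j=1,2,3$, with Fubini at each stage, yields the displayed identity. Where the extension is vector valued, the argument is applied componentwise.

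Next I estimate the shifted integral directly. On the shifted contour,
\[
  |e^{-ik\cdot(x+iy)}|=e^{k\cdot y}=e^{-\delta'(|k_1|+|k_2|+|k_3|)}.
\]
Together with $|u(x+iy,t)|\le \|u(\cdot,t)\|_{L^\infty(S_{\delta(t)})}\le M(t)\le C^*(T)$, this gives
\[
  |\widehat{u}_k(t)|\le C^*(T)\,e^{-\delta'\|k\|_1}\le C^*(T)\,e^{-\delta'|k|},
\]
using $\|k\|_1\ge |k|$. Now let $\delta'\uparrow\delta(t)$ to obtain the bound $C^*(T)e^{-\delta(t)|k|}$. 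Finally, $\delta(t)\ge\delta_{\min}$ implies $e^{-\delta(t)|k|}\le e^{-\delta_{\min}|k|}$, which is \eqref{eq:3d-exp-decay}.

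The only delicate step is justifying the contour shift. Working with $\delta'<\delta(t)$ means we never need the extension on the boundary of the open strip, where it may not exist. The periodicity of the extension in the real directions, which is what makes the vertical sides cancel, is exactly what the identity theorem provides. Everything else is bookkeeping: the passage from the $\ell^1$ norm to the Euclidean norm in the exponent only helps, and the uniform-in-time constant comes directly from the definition of $C^*(T)$ and $\delta_{\min}$ in Assumption~\ref{ass:a}.
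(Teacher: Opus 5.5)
Your proposal is correct and is essentially the paper's argument. You shift the contour to $[0,2\pi]^3+i\eta$ with $\eta_j=-\rho\,\mathrm{sgn}(k_j)$, bound $|e^{-ik\cdot(x+i\eta)}|=e^{-\rho\|k\|_1}\le e^{-\rho|k|}$, let $\rho\uparrow\delta(t)$, and use $\delta(t)\ge\delta_{\min}$; your per-variable Cauchy/identity-theorem justification of the shift just fills in what the paper compresses into ``by analyticity and periodicity.''

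One small slip: in the chain $\|u(\cdot,t)\|_{L^\infty(S_{\delta(t)})}\le M(t)\le C^*(T)$ the middle inequality need not hold, since $M(t)$ is only some upper bound. You should instead pass directly from the norm to $C^*(T)$, which is by definition the supremum of these norms over $[0,T]$.
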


\begin{proof}
We fix $t\in[0,T]$ and choose $0<\rho<\delta(t)$.
Let $\eta\in\mathbb{R}^3$ be a vector with $|\eta_j|=\rho$ and such that
$k\cdot\eta \le -\rho|k|$; for instance,
$\eta_j = -\rho\,\mathrm{sgn}(k_j)$ with $\mathrm{sgn}(0):=0$ satisfies
this property. By analyticity and periodicity, we can shift the contour of
integration from $[0,2\pi]^3$ to $[0,2\pi]^3+i\eta$:
\[
  \widehat{u}_k(t)
  = \frac{1}{(2\pi)^3}
    \int_{[0,2\pi]^3} u(x+i\eta,t)\,e^{-ik\cdot(x+i\eta)}\,dx.
\]
Taking absolute values and using $|\int g|\le\int|g|$, we obtain
\[
  |\widehat{u}_k(t)|
  \le \frac{1}{(2\pi)^3}
    \int_{[0,2\pi]^3}
      |u(x+i\eta,t)|\,\bigl|e^{-ik\cdot(x+i\eta)}\bigr|\,dx.
\]
Since $x+i\eta\in S_\rho\subset S_{\delta(t)}$, we have
$|u(x+i\eta,t)|\le\|u(\cdot,t)\|_{L^\infty(S_\rho)}$.
\[
  e^{-ik\cdot(x+i\eta)}
  = e^{-ik\cdot x}\,e^{-ik\cdot i\eta}
  = e^{-ik\cdot x}\,e^{k\cdot\eta},
\]
so $|e^{-ik\cdot(x+i\eta)}| = e^{k\cdot\eta}\le e^{-\rho|k|}$ by the
choice of $\eta$. Hence
\[
  |\widehat{u}_k(t)|
  \le \frac{1}{(2\pi)^3}\int_{[0,2\pi]^3}
        \|u(\cdot,t)\|_{L^\infty(S_\rho)} e^{-\rho|k|}\,dx
  = \|u(\cdot,t)\|_{L^\infty(S_\rho)} e^{-\rho|k|}.
\]
This holds for every $0<\rho<\delta(t)$.
Letting $\rho\uparrow\delta(t)$ and using $\delta(t)\ge\delta_{\min}$
yields
\[
  |\widehat{u}_k(t)|
  \le \|u(\cdot,t)\|_{L^\infty(S_{\delta(t)})} e^{-\delta(t)|k|}
  \le C^*(T)\,e^{-\delta_{\min}|k|},
\]
which is \eqref{eq:3d-exp-decay}.
\end{proof}

\subsection{Spectral truncation error in $L^2$}
The truncation error for analytic functions decays exponentially fast in $K$; this is standard in the theory of spectral methods \cite{CanutoHussainiQuarteroniZang1988, GottliebOrszag1977}.

Let $P_K$ for $K\ge 1$ denote the Fourier projector onto modes with
$|k|\le K$,
\[
  (P_K u)(x,t) := \sum_{|k|\le K}\widehat{u}_k(t)e^{ik\cdot x},
\]
and define the truncation operator $R_K:=I-P_K$, so that
$R_K u = u-P_K u$.

\begin{lemma}[The spectral truncation error]\label{lem:spectral-trunc-3d}
There exists $C_1(T)>0$ such that for all $t\in[0,T]$ and all $K\ge1$, under the assumptions of lemma~\ref{lem:analytic-decay},
\begin{equation}\label{eq:spectral-trunc-3d}
  \|R_K u(\cdot,t)\|_{L^2(\Omega)}
  \le C_1(T)\,(1+K)e^{-\delta_{\min}K}.
\end{equation}
\end{lemma}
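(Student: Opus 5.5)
The plan is to combine Parseval's identity with the pointwise coefficient bound \eqref{eq:3d-exp-decay} from Lemma~\ref{lem:analytic-decay}, and then estimate the resulting lattice sum over $|k|>K$ by grouping the wavevectors into spherical shells. With the normalization of $\widehat{u}_k$ used above, Parseval gives, for each fixed $t\in[0,T]$,
\[
  \|R_K u(\cdot,t)\|_{L^2(\Omega)}^2 = (2\pi)^3 \sum_{|k|>K} |\widehat{u}_k(t)|^2
  \le (2\pi)^3\, C^*(T)^2 \sum_{|k|>K} e^{-2\delta_{\min}|k|}.
\]
Everything after this step is independent of $t$, which is why the final constant depends only on $T$ (through $C^*(T)$) and on $\delta_{\min}$.

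Next I decompose $\{k\in\mathbb{Z}^3:|k|>K\}$ into shells $A_n:=\{k: n\le |k|<n+1\}$ with $n\ge m:=\lfloor K\rfloor$. I use the elementary counting bound $\#A_n\le c_0(n+1)^2$, where $c_0$ is an absolute constant. This follows by comparing volumes: the unit cubes centred at points of $A_n$ are disjoint and lie in the annulus $n-\sqrt3/2\le|x|<n+1+\sqrt3/2$, whose volume is $O((n+1)^2)$. On $A_n$ we have $e^{-2\delta_{\min}|k|}\le e^{-2\delta_{\min} n}$. Writing $q:=e^{-2\delta_{\min}}<1$, this gives
\[
  \sum_{|k|>K} e^{-2\delta_{\min}|k|} \le c_0\sum_{n\ge m}(n+1)^2 q^n .
\]

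For the tail sum I set $n=m+j$ and use $(n+1)\le (m+1)(1+j)$. This yields
\[
  \sum_{n\ge m}(n+1)^2 q^n \le (m+1)^2 q^m \sum_{j\ge0}(1+j)^2 q^j =: (m+1)^2 q^m\, S(\delta_{\min}),
\]
where $S(\delta_{\min})<\infty$ since $q<1$. Because $m\le K<m+1$, we have $m+1\le K+1$ and $q^m\le q^{K-1}=e^{2\delta_{\min}}e^{-2\delta_{\min}K}$. Taking square roots then gives \eqref{eq:spectral-trunc-3d} with
\[
  C_1(T)=(2\pi)^{3/2}C^*(T)\bigl(c_0 S(\delta_{\min})\bigr)^{1/2}e^{\delta_{\min}}.
\]

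The only step needing genuine care is the shell count together with the bookkeeping at the boundary $|k|\approx K$, since $K$ need not be an integer. This is handled by the floor function and by absorbing the factor $e^{\delta_{\min}}$ into $C_1(T)$. The polynomial prefactor $(1+K)$ comes precisely from $\sqrt{\#A_n}\sim n$. I would also check that $C^*(T)<\infty$ is implicit in Assumption~\ref{ass:a}: the statement defines $C^*(T)$ as a supremum, so finiteness must be assumed, and I would state this explicitly.
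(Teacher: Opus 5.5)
Your proof is correct and follows the same route as the paper: Parseval, the coefficient bound from Lemma~\ref{lem:analytic-decay}, a shell decomposition with $O((1+m)^2)$ lattice points per shell, the shift with $(1+m+j)\le(1+m)(1+j)$, and a final square root. Your version is slightly more careful than the paper's, which drops the $(2\pi)^3$ factor in Parseval and implicitly treats $K$ as an integer when summing shells over $m\ge K$; both are harmless at the level of constants, and your floor-function bookkeeping and explicit $C_1(T)$ handle them cleanly.
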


\begin{proof}
By Parseval's identity on $\Omega=[0,2\pi]^3$,
\[
  \|R_K u(\cdot,t)\|_{L^2(\Omega)}^2
  = \sum_{|k|>K} |\widehat{u}_k(t)|^2.
\]
We have
$
  |\widehat{u}_k(t)| \le C^*(T)e^{-\delta_{\min}|k|}
$
by using lemma~\ref{lem:analytic-decay},
and hence
\[
  \|R_K u(\cdot,t)\|_{L^2(\Omega)}^2
  \le C^*(T)^2 \sum_{|k|>K} e^{-2\delta_{\min}|k|}.
\]
We estimate the tail by grouping modes into shells. Define
\[
  N(m) := \#\{k\in\mathbb{Z}^3 : m<|k|\le m+1\},
  \qquad m=0,1,2,\dots.
\]
Since the number of lattice points in a shell of radius $m$ in three dimensions satisfies
\[
  N(m)\le C(1+m)^2,
\]
we have
\[
  \sum_{|k|>K} e^{-2\delta_{\min}|k|}
  = \sum_{m\ge K}
      \sum_{\substack{k\in\mathbb{Z}^3\\ m<|k|\le m+1}}
      e^{-2\delta_{\min}|k|}
  \le \sum_{m\ge K} N(m)e^{-2\delta_{\min}m}
  \le C\sum_{m\ge K}(1+m)^2 e^{-2\delta_{\min}m}.
\]
It remains to bound the last sum. Let $a:=2\delta_{\min}>0$. Then
\[
  \sum_{m\ge K}(1+m)^2 e^{-am}
  = e^{-aK}\sum_{n\ge0}(1+K+n)^2 e^{-an}.
\]
Since
\[
  1+K+n \le (1+K)(1+n),
\]
it follows that
\[
  (1+K+n)^2 \le (1+K)^2(1+n)^2.
\]
Therefore,
\[
  \sum_{m\ge K}(1+m)^2 e^{-am}
  \le (1+K)^2 e^{-aK}\sum_{n\ge0}(1+n)^2 e^{-an}.
\]
The series
\[
  \sum_{n\ge0}(1+n)^2 e^{-an}
\]
converges, since exponential decay dominates polynomial growth. Hence there exists a constant $C_a>0$ such that
\[
  \sum_{m\ge K}(1+m)^2 e^{-am}
  \le C_a(1+K)^2 e^{-aK}.
\]
Substituting back $a=2\delta_{\min}$ gives
\[
  \sum_{|k|>K} e^{-2\delta_{\min}|k|}
  \le C'(1+K)^2 e^{-2\delta_{\min}K}.
\]

Therefore,
\[
  \|R_K u(\cdot,t)\|_{L^2(\Omega)}^2
  \le C^*(T)^2 C'(1+K)^2 e^{-2\delta_{\min}K}.
\]
Taking square roots yields
\[
  \|R_K u(\cdot,t)\|_{L^2(\Omega)}
  \le C_1(T)(1+K)e^{-\delta_{\min}K},
\]
for a suitable constant $C_1(T)>0$.
\end{proof}

\subsection{$L^\infty$ bounds and spectral convergence}
We can also control the exact solution
and its spectral truncations in the $L^\infty$ norm under assumption~\ref{ass:a}.

\begin{lemma}[$L^\infty$ bound for exact solution]\label{lem:uinfty-bound}
There exists a constant
$C_\infty(T)>0$ under assumption~\ref{ass:a} for every $T>0$ such that
\[
  \sup_{0\le t\le T} \|u(\cdot,t)\|_{L^\infty(\Omega)} \le C_\infty(T).
\]
\end{lemma}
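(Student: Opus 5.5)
The plan is to reduce the claim to Assumption~\ref{ass:a} directly: the real torus $\Omega=[0,2\pi]^3$ is contained in every strip $S_{\delta(t)}$, so the sup norm on $\Omega$ is dominated by the sup norm on the strip. Concretely, I would fix $t\in[0,T]$ and note that for real $x\in\Omega$ we have $\Im x_j=0<\delta(t)$ for $j=1,2,3$. Hence $x\in S_{\delta(t)}$, and the holomorphic extension agrees with $u(\cdot,t)$ there. This gives
\[
  \|u(\cdot,t)\|_{L^\infty(\Omega)} \le \|u(\cdot,t)\|_{L^\infty(S_{\delta(t)})} \le M(t),
\]
and taking the supremum over $t$ yields the bound with $C_\infty(T):=C^*(T)$.

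The only point requiring care is that $C^*(T)$ is actually finite. Assumption~\ref{ass:a} only provides a pointwise bound $M(t)<\infty$, so the supremum is finite either because $M$ is bounded on $[0,T]$ or because $C^*(T)$ is implicitly assumed finite, as it already is in Lemma~\ref{lem:analytic-decay}. I expect this to be the main (mild) obstacle, and I would handle it by stating explicitly that finiteness of $C^*(T)$ is part of the standing hypothesis, since it is already used in \eqref{eq:3d-exp-decay}.

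An alternative route avoids the strip norm on $\Omega$ and uses Lemma~\ref{lem:analytic-decay} instead. The idea is to bound the Fourier series absolutely:
\[
  \|u(\cdot,t)\|_{L^\infty(\Omega)} \le \sum_{k\in\mathbb{Z}^3}|\widehat u_k(t)| \le C^*(T)\sum_{k}e^{-\delta_{\min}|k|}.
\]
The last sum can then be controlled by the same shell-counting argument as in Lemma~\ref{lem:spectral-trunc-3d}, using $N(m)\le C(1+m)^2$. This gives $C_\infty(T)=C^*(T)\,C_{\delta_{\min}}$, with $C_{\delta_{\min}}$ finite because $\sum_{m}(1+m)^2e^{-\delta_{\min}m}<\infty$. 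It also shows the Fourier series converges uniformly, and this uniform convergence is what the subsequent $L^\infty$ spectral convergence estimates will likely need.

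I would present the first argument as the proof and mention the second as a remark.
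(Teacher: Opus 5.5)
Your argument is correct, and your primary route differs from the paper's. The paper's proof is essentially what you relegate to a remark: it bounds $\|u(\cdot,t)\|_{L^\infty}\le\sum_k|\widehat u_k(t)|\le C^*(T)\sum_k e^{-\delta_{\min}|k|}$ via Lemma~\ref{lem:analytic-decay}, and takes $C_\infty(T)$ to be the right-hand side.

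Your direct argument notes that $\mathbb{R}^3\subset S_{\delta(t)}$ because $\delta(t)\ge\delta_{\min}>0$. The real values are therefore dominated by the strip supremum. This needs no Fourier analysis and gives the sharper constant $C_\infty(T)=C^*(T)$; the paper's constant is larger, since the $k=0$ term of its series alone equals $1$.

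What the paper's coefficient-based route buys is robustness. The same bound applies verbatim to $P_Ku$, to $\nabla P_Ku$ (with an extra factor $|k|$ in the sum), and to the tail $R_Ku$. Exactly these are used next: in Lemma~\ref{lem:spectral-trunc-Linf}, and in the uniform-in-$K$ bounds on $\|P_Ku\|_{L^\infty}$ and $\|\nabla P_Ku\|_{L^\infty}$ in Lemma~\ref{error}. Your sup-norm argument controls $u$ itself but says nothing directly about its truncations.

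Your observation that both routes silently need $C^*(T)<\infty$ is correct and applies equally to the paper's proof. Assumption~\ref{ass:a} literally gives only $M(t)<\infty$ pointwise. Your shell-counting justification of $\sum_k e^{-\delta_{\min}|k|}<\infty$ is also more careful than the paper's phrase ``dominated by a geometric series''. In three dimensions that phrase needs something like $|k|\ge(|k_1|+|k_2|+|k_3|)/\sqrt3$ to reduce the sum to a product of one-dimensional geometric series.
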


\begin{proof}
By lemma~\ref{lem:analytic-decay}, for all $t\in[0,T]$ and
$k\in\mathbb{Z}^3$,
\(
  |\widehat{u}_k(t)| \le C^*(T) e^{-\delta_{\min}|k|}.
\)
Hence
\[
  \|u(\cdot,t)\|_{L^\infty}
  \le \sum_{k\in\mathbb{Z}^3} |\widehat{u}_k(t)|
  \le C^*(T) \sum_{k\in\mathbb{Z}^3} e^{-\delta_{\min}|k|}
  =: C_\infty(T),
\]
and the series on the right converges since it is dominated by a
geometric series. This bound is uniform in $t\in[0,T]$.
\end{proof}

We now obtain an exponential $L^\infty$ spectral convergence estimate.

\begin{lemma}[$L^\infty$ spectral truncation error]\label{lem:spectral-trunc-Linf}
Let $P_K$ be the Fourier projector defined above and let $R_K=I-P_K$.
There exist constants $C_\infty(T)>0$ and $c\in(0,\delta_{\min})$ such that
for all $t\in[0,T]$ and $K\ge1$, under assumption~\ref{ass:a},
\[
  \|R_Ku(\cdot,t)\|_{L^\infty(\Omega)}
  \le C_\infty(T)e^{-cK}.
\]
\end{lemma}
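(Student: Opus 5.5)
The plan is to bound the sup norm of the tail by the $\ell^1$ norm of its Fourier coefficients, exactly as in the proof of lemma~\ref{lem:uinfty-bound}, and then estimate that tail sum with the shell-counting argument of lemma~\ref{lem:spectral-trunc-3d}. Since $R_Ku(x,t)=\sum_{|k|>K}\widehat{u}_k(t)e^{ik\cdot x}$ and this series converges absolutely by lemma~\ref{lem:analytic-decay}, we have for every $x\in\Omega$ and $t\in[0,T]$
\[
  \|R_Ku(\cdot,t)\|_{L^\infty(\Omega)}\le \sum_{|k|>K}|\widehat{u}_k(t)|
  \le C^*(T)\sum_{|k|>K}e^{-\delta_{\min}|k|}.
\]

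Next, group the modes into the shells $m<|k|\le m+1$, $m\ge K$, and use $N(m)\le C(1+m)^2$. With $a:=\delta_{\min}$ this gives
\[
  \sum_{|k|>K}e^{-\delta_{\min}|k|}\le C\sum_{m\ge K}(1+m)^2e^{-am}\le C_a(1+K)^2e^{-\delta_{\min}K}.
\]
The second inequality is the same shift-and-factor computation as before: substitute $m=K+n$ and use $(1+K+n)^2\le(1+K)^2(1+n)^2$. The resulting bound is $C^*(T)C_a(1+K)^2e^{-\delta_{\min}K}$. This is uniform in $t$ because $C^*(T)$ and $\delta_{\min}$ are.

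The only remaining step, and the only place where $c<\delta_{\min}$ is needed, is absorbing the polynomial prefactor. Fix any $c\in(0,\delta_{\min})$, for instance $c=\delta_{\min}/2$. Then
\[
  (1+K)^2e^{-(\delta_{\min}-c)K}\le \sup_{s\ge 0}(1+s)^2e^{-(\delta_{\min}-c)s}=:B_c<\infty,
\]
since exponential decay dominates polynomial growth; an explicit value comes from maximizing at $1+s=2/(\delta_{\min}-c)$. Hence $\|R_Ku(\cdot,t)\|_{L^\infty}\le C^*(T)C_aB_c\,e^{-cK}$. The constant in the statement can then be taken as $\max\{C_\infty(T),C^*(T)C_aB_c\}$, enlarging the constant of lemma~\ref{lem:uinfty-bound} if necessary so that the same symbol $C_\infty(T)$ works for both lemmas.

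None of these steps is a real obstacle. The point needing care is that the $L^\infty$ estimate cannot use Parseval, so one must pass through the $\ell^1$ tail. That tail loses a factor $(1+K)^2$ rather than $(1+K)$, and this is why only a rate $c<\delta_{\min}$ rather than $\delta_{\min}$ itself is claimed.
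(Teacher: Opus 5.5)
Your proof is correct and follows the paper's argument essentially step for step. You bound the tail in $\ell^1$ via Lemma~\ref{lem:analytic-decay}, count shells with $N(m)\le C(1+m)^2$ to get $(1+K)^2e^{-\delta_{\min}K}$, and absorb the polynomial prefactor into $e^{-cK}$ for any $c\in(0,\delta_{\min})$. One small caveat on your explicit constant $B_c$: the critical point $1+s=2/(\delta_{\min}-c)$ lies in $s\ge 0$ only when $\delta_{\min}-c\le 2$. Otherwise the supremum is $1$, attained at $s=0$, which does not affect the argument.
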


\begin{proof}
We write
\[
  R_Ku(x,t)=\sum_{|k|>K}\widehat{u}_k(t)e^{ik\cdot x}.
\]
By lemma~\ref{lem:analytic-decay},
\[
  |\widehat{u}_k(t)|\le C^*(T)e^{-\delta_{\min}|k|},
\]
hence
\[
  \|R_Ku(\cdot,t)\|_{L^\infty}
  \le \sum_{|k|>K}|\widehat{u}_k(t)|
  \le C^*(T)\sum_{|k|>K}e^{-\delta_{\min}|k|}.
\]
We have by grouping Fourier modes into shells as in the proof of lemma~\ref{lem:spectral-trunc-3d}
\[
  \sum_{|k|>K}e^{-\delta_{\min}|k|}
  \le C\sum_{m\ge K}(1+m)^2e^{-\delta_{\min}m}
  \le C'(1+K)^2e^{-\delta_{\min}K}.
\]
Therefore,
\[
  \|R_Ku(\cdot,t)\|_{L^\infty}
  \le C(T)(1+K)^2e^{-\delta_{\min}K}.
\]
Since for any $c\in(0,\delta_{\min})$ the polynomial factor $(1+K)^2$ can be absorbed into the weaker exponential $e^{-cK}$, there exists a constant $C_\infty(T)>0$ such that
\[
  \|R_Ku(\cdot,t)\|_{L^\infty(\Omega)}
  \le C_\infty(T)e^{-cK}.
\]
\end{proof}

\subsection{The spectral spatial error and temporal error}
The convergence of spectral approximations including exponential convergence in the analytic
setting, is standard~\cite{CanutoHussainiQuarteroniZang1988, GottliebOrszag1977}.
This is a result from the ordinary differential equations numerical analysis~\cite{HairerNorsettWanner1993}.
Time discretization and stability issues for spectral methods are discussed
in the context of PDEs~\cite{GottliebTurkel1980}.

We now introduce the finite dimensional spectral approximation
$u_K$.

\begin{lemma}[The spatial error]\label{error}
Let
\[
B(a,b):=P\bigl((a\cdot\nabla)b\bigr),
\]
where $P$ is the Leray projector. We assume assumption~\ref{ass:a}. Let
$\mathcal F$ be a forcing operator such that, for every $T>0$, there exists a
neighborhood of the trajectories
\[
\{u(t):0\le t\le T\}\cup \{u_K(t):0\le t\le T,\ K\ge 1\}
\]
and a constant $L_T>0$ such that
\begin{equation}\label{eq:F-Hminus1}
\mathcal F(v)\in H^{-1}(\Omega)
,
\end{equation}
and
\begin{equation}\label{eq:F-Lip-L2-Hminus1}
\|\mathcal F(v)-\mathcal F(w)\|_{H^{-1}(\Omega)}
\le
L_T\|v-w\|_{L^2(\Omega)}
.
\end{equation}

Let $u_K$ be the spectral approximation solving
\[
\partial_t u_K
=
P_K\Bigl(
\nu\Delta u_K
-
P\bigl((u_K\cdot\nabla)u_K\bigr)
+
\mathcal F(u_K)
\Bigr),
\qquad
u_K(0)=P_Ku_0,
\]
and let the exact solution $u$ solve
\[
\partial_t u
=
\nu\Delta u
-
P\bigl((u\cdot\nabla)u\bigr)
+
\mathcal F(u).
\]
Then there exists a constant $C_2(T)>0$ such that for all $t\in[0,T]$ and
all $K\ge 1$,
\[
\|u_K(\cdot,t)-u(\cdot,t)\|_{L^2(\Omega)}
\le
C_2(T)(1+K)^2e^{-\delta_{\min}K}.
\]
Equivalently, after weakening the exponential rate, one may write
\[
\|u_K(\cdot,t)-u(\cdot,t)\|_{L^2(\Omega)}
\le
C_2(T)e^{-cK}
\qquad\text{for any }c\in(0,\delta_{\min}).
\]
\end{lemma}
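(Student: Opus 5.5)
The plan is the standard splitting of the error into a projection part and a discrete part,
\[
u_K-u = (u_K - P_Ku) - R_K u =: e_K - R_K u .
\]
The second piece is handled directly by lemma~\ref{lem:spectral-trunc-3d}: $\|R_K u\|_{L^2}\le C_1(T)(1+K)e^{-\delta_{\min}K}$. It therefore remains to bound $e_K$, which lies in the range of $P_K$ and satisfies $e_K(0)=0$ because $u_K(0)=P_Ku_0$.

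\textbf{Error equation.} Applying $P_K$ to the exact equation and subtracting it from the scheme (using that $P_K$ commutes with $\Delta$ and with $P$) gives
\[
\partial_t e_K = \nu\Delta e_K - P_K\bigl[B(u_K,u_K)-B(u,u)\bigr] + P_K\bigl[\mathcal F(u_K)-\mathcal F(u)\bigr].
\]
Next I take the $L^2$ inner product with $e_K$. The viscous term yields $-\nu\|\nabla e_K\|^2$. For the nonlinearity I write $u_K = P_Ku + e_K$ and $u = P_Ku + R_Ku$, and expand $B(u_K,u_K)-B(u,u)$ into terms that are linear or quadratic in $e_K$, plus consistency terms containing $R_Ku$.

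\textbf{Bounding the nonlinear terms.} Because $u_K$ and $e_K$ are divergence free, $(B(u_K,e_K),e_K)=0$. The remaining terms are handled as follows.
\begin{itemize}
\item The term $(B(e_K,P_Ku),e_K)$ is bounded by $\|\nabla P_Ku\|_{L^\infty}\|e_K\|^2$. The factor $\|\nabla P_Ku\|_{L^\infty}$ is finite uniformly in $t$ and $K$ by the exponential decay of lemma~\ref{lem:analytic-decay}, since $\sum|k||\widehat u_k|<\infty$.
\item The consistency terms $(B(R_Ku,u),e_K)$ and $(B(P_Ku,R_Ku),e_K)$ are bounded by $C\|R_Ku\|_{L^2}\|e_K\|$, or after integration by parts by $C\|R_K u\|\,\|\nabla e_K\|$. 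Here I use lemma~\ref{lem:uinfty-bound} and the analogous $W^{1,\infty}$ bound coming from the coefficient decay.
\end{itemize}

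\textbf{Bounding the forcing term.} By \eqref{eq:F-Lip-L2-Hminus1},
\[
\bigl|(\mathcal F(u_K)-\mathcal F(u),e_K)\bigr| \le L_T\|u_K-u\|_{L^2}\|e_K\|_{H^1} \le L_T\bigl(\|e_K\|+\|R_Ku\|\bigr)\bigl(\|e_K\|+\|\nabla e_K\|\bigr).
\]
The gradient part is absorbed into $\tfrac{\nu}{2}\|\nabla e_K\|^2$ using Young's inequality. The neighbourhood hypothesis is used only to guarantee that $L_T$ applies along both trajectories.

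\textbf{Closing with Gronwall.} Collecting the estimates gives
\[
\frac{d}{dt}\|e_K\|^2 \le C(T)\|e_K\|^2 + C(T)\|R_Ku\|^2 ,
\]
with $C(T)$ depending on $\nu$, $L_T$, $C^*(T)$ and $\delta_{\min}$. Gronwall's inequality with $e_K(0)=0$, together with lemma~\ref{lem:spectral-trunc-3d}, then yields
\[
\|e_K(t)\|\le C(T)(1+K)e^{-\delta_{\min}K}.
\]
Combining this with the bound on $R_Ku$ gives the stated estimate; the extra factor $(1+K)^2$ in the statement is harmless and can be retained if any consistency term is estimated more crudely. The weakened form $C_2(T)e^{-cK}$ follows by absorbing the polynomial factor, exactly as in lemma~\ref{lem:spectral-trunc-Linf}.

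\textbf{Main obstacle.} The hard step is the nonlinear term. The bound must not require any a priori control of $u_K$ beyond what comes from $u$; this is why the splitting is chosen so that every term is either the skew-symmetric one, which vanishes, or involves $P_Ku$ or $R_Ku$, which are controlled by assumption~\ref{ass:a}. The one term quadratic in $e_K$ is $(B(e_K,e_K),e_K)$, and it too vanishes by skew-symmetry because $e_K$ is divergence free. So no bootstrap is needed, although I would still check that the decomposition avoids any $\|u_K\|_{L^\infty}$ factor.
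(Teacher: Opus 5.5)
Your proposal is correct and follows essentially the same route as the paper: the splitting $u_K-u=e_K-R_Ku$, the $L^2$ energy estimate for $e_K\in\operatorname{Ran}(P_K)$ with skew-symmetry killing $B(P_Ku,e_K)+B(e_K,e_K)=B(u_K,e_K)$, the $H^{-1}$ Lipschitz bound plus Young for the forcing, and Gronwall from $e_K(0)=0$. Your only deviation is grouping $B(R_Ku,P_Ku)+B(R_Ku,R_Ku)=B(R_Ku,u)$. This lets you use $\|\nabla u\|_{L^\infty}$ in place of the paper's $\|R_Ku\|_{L^\infty}$ estimate, a mild simplification. Make sure $(B(P_Ku,R_Ku),e_K)$ goes through the integration-by-parts bound $\|P_Ku\|_{L^\infty}\|R_Ku\|_{L^2}\|\nabla e_K\|_{L^2}$, since the direct bound involves $\|\nabla R_Ku\|_{L^2}$ rather than $\|R_Ku\|_{L^2}$.
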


\begin{proof}
We define
\[
e_K(t):=u_K(t)-P_Ku(t),
\qquad
r_K(t):=R_Ku(t)=u(t)-P_Ku(t).
\]
Then
\[
u_K-u=e_K-r_K,
\]
and therefore
\[
\|u_K-u\|_{L^2}\le \|e_K\|_{L^2}+\|r_K\|_{L^2}.
\]

Since $P_K$ commutes with $\Delta$, and since
\[
\partial_t(P_Ku)
=
\nu\Delta P_Ku-P_KB(u,u)+P_K\mathcal F(u),
\]
subtracting the equations for $u_K$ and $P_Ku$ yields
\[
\partial_t e_K
=
\nu\Delta e_K
-
P_K\bigl(B(u_K,u_K)-B(u,u)\bigr)
+
P_K\bigl(\mathcal F(u_K)-\mathcal F(u)\bigr).
\]
Because $u_K\in\operatorname{Ran}(P_K)$ and $P_Ku\in\operatorname{Ran}(P_K)$,
we have $e_K\in\operatorname{Ran}(P_K)$. Taking the $L^2$ inner product with
$e_K$ and using that $P_K$ is the orthogonal projector on $L^2$, we obtain
\begin{equation}\label{eq:ek-energy-general}
\frac12\frac{d}{dt}\|e_K\|_{L^2}^2
+\nu\|\nabla e_K\|_{L^2}^2
=
I_{\rm NS}+I,
\end{equation}
where
\[
I_{\rm NS}:=
-\bigl\langle B(u_K,u_K)-B(u,u),e_K\bigr\rangle,
\qquad
I:=
\bigl\langle \mathcal F(u_K)-\mathcal F(u),e_K\bigr\rangle_{H^{-1},H^1}.
\]

We first estimate the Navier--Stokes term. Then using
\[
u_K=P_Ku+e_K,
\qquad
u=P_Ku+r_K,
\]
we expand
\begin{eqnarray*}
B(u_K,u_K)-B(u,u)
&=
B(P_Ku+e_K,P_Ku+e_K)-B(P_Ku+r_K,P_Ku+r_K)\\
&=
B(e_K,P_Ku)+B(P_Ku,e_K)+B(e_K,e_K)\\
&\quad
-B(P_Ku,r_K)-B(r_K,P_Ku)-B(r_K,r_K).
\end{eqnarray*}
Introduce the standard trilinear form
\[
b(a,b,c):=\int_\Omega (a\cdot\nabla b)\cdot c\,dx.
\]
Then we have 
\[
\langle B(a,b),c\rangle=b(a,b,c),
\qquad
b(a,c,c)=0,
\qquad
b(a,b,c)=-b(a,c,b)
\]
for divergence-free vector fields.

Since $u$, $P_Ku$, $r_K$, $u_K$, and $e_K$ are all divergence free, we have
\[
\langle B(P_Ku,e_K),e_K\rangle=0,
\qquad
\langle B(e_K,e_K),e_K\rangle=0.
\]
Hence
\begin{eqnarray*}
I_{\rm NS}
&=
-\langle B(e_K,P_Ku),e_K\rangle
+\langle B(P_Ku,r_K),e_K\rangle\\
&\quad
+\langle B(r_K,P_Ku),e_K\rangle
+\langle B(r_K,r_K),e_K\rangle.
\end{eqnarray*}
Estimating term by term gives
\[
|\langle B(e_K,P_Ku),e_K\rangle|
\le
\|\nabla P_Ku\|_{L^\infty}\|e_K\|_{L^2}^2,
\]
\[
|\langle B(P_Ku,r_K),e_K\rangle|
=
|b(P_Ku,e_K,r_K)|
\le
\|P_Ku\|_{L^\infty}\|\nabla e_K\|_{L^2}\|r_K\|_{L^2},
\]
\[
|\langle B(r_K,P_Ku),e_K\rangle|
\le
\|\nabla P_Ku\|_{L^\infty}\|r_K\|_{L^2}\|e_K\|_{L^2},
\]
and
\[
|\langle B(r_K,r_K),e_K\rangle|
=
|b(r_K,e_K,r_K)|
\le
\|r_K\|_{L^\infty}\|\nabla e_K\|_{L^2}\|r_K\|_{L^2}.
\]

By the analyticity strip assumption, the Fourier coefficients of $u$ satisfy
\[
|\widehat{u}_k(t)|\le C(T)e^{-\delta_{\min}|k|},
\qquad k\in\mathbb Z^3,\quad t\in[0,T].
\]
Hence, uniformly in $K$,
\[
\|P_Ku(\cdot,t)\|_{L^\infty}\le C(T),
\qquad
\|\nabla P_Ku(\cdot,t)\|_{L^\infty}\le C(T).
\]
We infer
\[
I_{\rm NS}
\le
\frac{\nu}{4}\|\nabla e_K\|_{L^2}^2
+
C(T)\|e_K\|_{L^2}^2
+
C(T)\Bigl(
\|r_K\|_{L^2}^2+\|r_K\|_{L^\infty}^2\|r_K\|_{L^2}^2
\Bigr)
\]
using Young's inequality.
We now estimate the forcing term. By \eqref{eq:F-Lip-L2-Hminus1},
\[
\|\mathcal F(u_K)-\mathcal F(u)\|_{H^{-1}}
\le
L_T\|u_K-u\|_{L^2}
\le
L_T\bigl(\|e_K\|_{L^2}+\|r_K\|_{L^2}\bigr).
\]
Therefore,
\begin{eqnarray*}
|I|
&\le
\|\mathcal F(u_K)-\mathcal F(u)\|_{H^{-1}}\|e_K\|_{H^1}\\
&\le
L_T\bigl(\|e_K\|_{L^2}+\|r_K\|_{L^2}\bigr)
\bigl(\|e_K\|_{L^2}+\|\nabla e_K\|_{L^2}\bigr).
\end{eqnarray*}
By Young's inequality,
\[
|I|
\le
\frac{\nu}{4}\|\nabla e_K\|_{L^2}^2
+
C(T)\|e_K\|_{L^2}^2
+
C(T)\|r_K\|_{L^2}^2.
\]

Substituting the estimates for $I_{\rm NS}$ and $I$ into
\eqref{eq:ek-energy-general}, we obtain
\[
\frac{d}{dt}\|e_K\|_{L^2}^2
\le
C(T)\|e_K\|_{L^2}^2
+
C(T)\Bigl(
\|r_K\|_{L^2}^2+\|r_K\|_{L^\infty}^2\|r_K\|_{L^2}^2
\Bigr).
\]

By Lemma~\ref{lem:spectral-trunc-3d},
\[
\|r_K(\cdot,t)\|_{L^2}
\le
C_1(T)(1+K)e^{-\delta_{\min}K},
\]
and by the $L^\infty$ spectral truncation estimate,
\[
\|r_K(\cdot,t)\|_{L^\infty}
\le
C_\infty(T)e^{-cK}
\qquad\text{for some }c\in(0,\delta_{\min}).
\]
Hence
\[
\|r_K\|_{L^2}^2+\|r_K\|_{L^\infty}^2\|r_K\|_{L^2}^2
\le
C(T)(1+K)^2e^{-2\delta_{\min}K}.
\]
Therefore,
\[
\frac{d}{dt}\|e_K\|_{L^2}^2
\le
C(T)\|e_K\|_{L^2}^2
+
C(T)(1+K)^2e^{-2\delta_{\min}K}.
\]

Since
\[
e_K(0)=u_K(0)-P_Ku(0)=P_Ku_0-P_Ku_0=0,
\]
Gronwall's inequality yields
\[
\|e_K(t)\|_{L^2}^2
\le
C(T)(1+K)^2e^{-2\delta_{\min}K}.
\]
Thus
\[
\|e_K(t)\|_{L^2}
\le
C(T)(1+K)e^{-\delta_{\min}K}.
\]
Then
\[
\|u_K(t)-u(t)\|_{L^2}
\le
\|e_K(t)\|_{L^2}+\|r_K(t)\|_{L^2}
\le
C_2(T)(1+K)e^{-\delta_{\min}K}.
\]
In particular, after enlarging the constant, this implies
\[
\|u_K(t)-u(t)\|_{L^2}
\le
C_2(T)(1+K)^2e^{-\delta_{\min}K}.
\]
This completes the proof.
\end{proof}

\begin{lemma}[Temporal discretization error]\label{lem:time-error}
Let $u_K(t)$ solve the finite-dimensional system
\[
  \partial_t u_K = \mathrm{rhs}_K(u_K),\qquad u_K(0)=u_{K,0},
\]
where $\mathrm{rhs}_K$ is locally Lipschitz on a bounded set containing the
trajectory $u_K([0,T])$.
Let $u_{K,\Delta t}^n$ be defined by a one-step time discretization
\[
  u_{K,\Delta t}^{n+1} = \Phi_{\Delta t}(u_{K,\Delta t}^n),\qquad
  u_{K,\Delta t}^0 = u_K(0),
\]
of order $p\ge1$.
Then there exists $C_3(T)>0$ such that for all $t_n=n\Delta t\le T$,
\[
  \|u_{K,\Delta t}^n - u_K(t_n)\|_{L^2}
  \le C_3(T)\,(\Delta t)^p.
\]
\end{lemma}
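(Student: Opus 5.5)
The proof is the classical convergence argument for one-step methods (Lax-type ``consistency plus stability implies convergence''), carried out on the finite-dimensional space $\operatorname{Ran}(P_K)$ with the $L^2$ norm. Throughout, ``order $p$'' is read in its standard sense: the local truncation error is $O((\Delta t)^{p+1})$ for sufficiently smooth exact trajectories.

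\textbf{Step 1: a tube around the trajectory.} Since $u_K([0,T])$ is compact, fix $\rho>0$ and the closed tube
\[
  \mathcal U:=\{v:\operatorname{dist}_{L^2}(v,u_K([0,T]))\le\rho\}.
\]
This set is bounded. Shrinking $\rho$ if necessary, $\mathcal U$ lies inside the bounded set on which $\mathrm{rhs}_K$ is Lipschitz, with some constant $L$. Because $\mathrm{rhs}_K$ is a polynomial (quadratic plus linear) vector field on a finite-dimensional space, it is smooth. Hence $u_K\in C^{p+1}([0,T])$, with $\sup_t\|\partial_t^{p+1}u_K\|_{L^2}$ bounded by a constant depending on $T$ and $K$.

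\textbf{Step 2: consistency and stability.} Consistency comes from order $p$ together with the smoothness of Step 1: the local error
\[
  \tau_n:=u_K(t_{n+1})-\Phi_{\Delta t}(u_K(t_n))
\]
satisfies $\|\tau_n\|\le C_{\rm loc}(\Delta t)^{p+1}$. For RK4 this is checked by Taylor expansion of the stages, and the constant depends on derivatives of $\mathrm{rhs}_K$ on $\mathcal U$. Stability says that for $v,w\in\mathcal U$ and $\Delta t\le\Delta t_0$,
\[
  \|\Phi_{\Delta t}(v)-\Phi_{\Delta t}(w)\|\le(1+C_\Phi\Delta t)\|v-w\|.
\]
For an explicit Runge--Kutta method this follows by comparing stages inductively, using the Lipschitz constant $L$. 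Here $\Delta t_0$ is small enough that all intermediate stages stay in a slightly larger tube.

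\textbf{Step 3: error recursion.} Set $E_n:=\|u^n_{K,\Delta t}-u_K(t_n)\|$. Writing
\[
  u^{n+1}_{K,\Delta t}-u_K(t_{n+1})=\bigl[\Phi_{\Delta t}(u^n_{K,\Delta t})-\Phi_{\Delta t}(u_K(t_n))\bigr]-\tau_n
\]
gives
\[
  E_{n+1}\le(1+C_\Phi\Delta t)E_n+C_{\rm loc}(\Delta t)^{p+1},\qquad E_0=0,
\]
as long as $u^n_{K,\Delta t}\in\mathcal U$. The discrete Gronwall inequality then yields
\[
  E_n\le \frac{C_{\rm loc}}{C_\Phi}\bigl(e^{C_\Phi T}-1\bigr)(\Delta t)^p=:C_3(T)(\Delta t)^p .
\]

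\textbf{Main obstacle.} The Lipschitz and consistency bounds are only local, so Step 3 must not be circular. I would close it by induction on $n$, using the bound just derived. If $C_3(T)(\Delta t)^p\le\rho$, then every iterate stays in $\mathcal U$, and the recursion remains valid at the next step. For the finitely many larger step sizes $\Delta t\in(\Delta t_*,T]$, a bound of the form $C(\Delta t)^p$ follows trivially after enlarging $C_3$, provided the iterates remain finite. Strictly, the statement should be read for $\Delta t\le\Delta t_*$, where $\Delta t_*$ depends on $K$. This is also where the dependence of $C_3$ on $K$ enters, through the stiffness $\nu K^2$ in the Lipschitz constant; it is harmless here because $K$ is fixed.
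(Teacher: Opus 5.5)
Your proposal is correct and follows the same route as the paper. Both split $e_{n+1}$ by adding and subtracting $\Phi_{\Delta t}(u_K(t_n))$, combine the $(1+C\Delta t)$ Lipschitz stability of the one-step map with the $O((\Delta t)^{p+1})$ local truncation error, and close with discrete Gronwall from $e_0=0$. Your induction keeping the iterates in the tube $\mathcal U$ once $C_3(T)(\Delta t)^p\le\rho$ (hence for $\Delta t\le\Delta t_*$), and your remark that $C_3$ depends on $K$ through the $\nu K^2$ stiffness, make explicit what the paper leaves implicit in ``the relevant bounded set.''
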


\begin{proof}
We define
\[
  e_n := u_{K,\Delta t}^n - u_K(t_n).
\]
This gives by adding and subtracting $\Phi_{\Delta t}(u_K(t_n))$
\[
  e_{n+1}
  = \Phi_{\Delta t}(u_{K,\Delta t}^n) - \Phi_{\Delta t}(u_K(t_n))
    + \bigl(\Phi_{\Delta t}(u_K(t_n)) - u_K(t_{n+1})\bigr).
\]
By the Lipschitz stability of the one-step map on the relevant bounded set,
\[
  \|\Phi_{\Delta t}(v)-\Phi_{\Delta t}(w)\|_{L^2}
  \le (1+C\Delta t)\|v-w\|_{L^2}.
\]
Since the method is of order $p$, the local truncation error satisfies
\[
  \|\Phi_{\Delta t}(u_K(t_n)) - u_K(t_{n+1})\|_{L^2}
  \le C(\Delta t)^{p+1}.
\]
Hence
\[
  \|e_{n+1}\|_{L^2}
  \le (1+C\Delta t)\|e_n\|_{L^2} + C(\Delta t)^{p+1}.
\]
Because $e_0=0$, a discrete Gronwall argument yields
\[
  \|e_n\|_{L^2}\le C_3(T)(\Delta t)^p,\qquad t_n\le T.
\]
\end{proof}

\subsection{The fully discrete error and resolution condition}
The bounds of form combining exponential spectral accuracy in space with algebraic accuracy of order $p$ in time, are standard in the analysis of fully discrete spectral schemes~\cite{CanutoHussainiQuarteroniZang1988, GottliebOrszag1977, GottliebTurkel1980}.

\begin{theorem}[The fully discrete spectral accuracy]\label{thm:full-error-3d}
We assume \eqref{eq:PDE}, assumptions~\ref{ass:a},
and lemmas~\ref{lem:analytic-decay}, \ref{lem:spectral-trunc-3d},
\ref{error}, and~\ref{lem:time-error}.
Then there exist constants $C_1(T),C_2(T)>0$ such that for all
$t_n=n\Delta t\le T$,
\begin{equation}\label{eq:full-error-3d}
  \|u_{K,\Delta t}^n-u(\cdot,t_n)\|_{L^2(\Omega)}
  \le C_1(T)(1+K)^2e^{-\delta_{\min}K}+C_2(T)(\Delta t)^p.
\end{equation}
\end{theorem}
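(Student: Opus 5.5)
The plan is a triangle inequality that splits the fully discrete error into a spatial part and a temporal part. For each $t_n=n\Delta t\le T$ we insert the semi-discrete spectral solution $u_K(t_n)$ and write
\[
  \|u_{K,\Delta t}^n-u(\cdot,t_n)\|_{L^2(\Omega)}
  \le \|u_{K,\Delta t}^n-u_K(t_n)\|_{L^2(\Omega)}
  + \|u_K(t_n)-u(\cdot,t_n)\|_{L^2(\Omega)}.
\]
The second term is the spatial semi-discretization error. Lemma~\ref{error} applies directly, since Assumption~\ref{ass:a} is in force. It gives the bound $C_2(T)(1+K)^2e^{-\delta_{\min}K}$ uniformly in $t\in[0,T]$, hence at every $t_n$. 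We rename this constant as the $C_1(T)$ appearing in \eqref{eq:full-error-3d}. Lemma~\ref{lem:spectral-trunc-3d} is used here only through Lemma~\ref{error}.

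The first term is the pure time-stepping error of the one-step scheme applied to the finite-dimensional system for $u_K$. Note that $u_{K,\Delta t}^0=u_K(0)=P_Ku_0$. We invoke Lemma~\ref{lem:time-error} with $\mathrm{rhs}_K(v)=P_K(\nu\Delta v-B(v,v)+\mathcal F(v))$ and $\Phi_{\Delta t}$ the RK4 map, so $p=4$. That lemma then gives $C_3(T)(\Delta t)^p$, which becomes the $C_2(T)$ of the theorem. Adding the two bounds yields \eqref{eq:full-error-3d}.

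The main obstacle is checking the hypotheses of Lemma~\ref{lem:time-error} with a constant that does not depend on $K$. Local Lipschitz continuity of $\mathrm{rhs}_K$ and the order-$p$ local truncation bound must hold on a bounded set containing $u_K([0,T])$. The constants involved can grow with $K$: $\|\nu\Delta\|$ on $\operatorname{Ran}(P_K)$ is $\nu K^2$, and the bilinear term costs a factor of $K$ through Bernstein's inequality. So the uniformity is genuinely an issue.

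I would handle this in three steps. First, Lemma~\ref{error}, together with Lemma~\ref{lem:uinfty-bound} and the uniform bound on $\|P_Ku\|_{L^\infty}$, shows that $u_K$ stays in an $L^2$ ball (indeed an analytic-class neighborhood) around $u$ that is independent of $K$. This gives a bounded set on which $\mathrm{rhs}_K$ is Lipschitz. Second, the Lipschitz constant of $\Phi_{\Delta t}$ on that set has the form $1+C\Delta t$ under a CFL/stability restriction of the type $\nu K^2\Delta t\lesssim 1$ and $K\|u\|_{L^\infty}\Delta t\lesssim 1$, which is the adaptive CFL step used in the computations. Third, with $K$ fixed, I would simply allow $C_2(T)$ to depend on the resolution, as the statement permits.

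If full uniformity in $K$ is required, I would bound the local truncation error through time derivatives of $u_K$ up to order $p+1$. These are controlled in $L^2$ by the exponential decay of the spectrum inherited from Lemma~\ref{lem:analytic-decay} and the closeness of $u_K$ to $u$.
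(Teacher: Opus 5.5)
Your proposal matches the paper's proof: the same triangle-inequality split through $u_K(t_n)$, with Lemma~\ref{error} bounding the spatial part and Lemma~\ref{lem:time-error} the temporal part, followed by renaming of constants. You also point out that the time-stepping constant from Lemma~\ref{lem:time-error} is obtained for a fixed finite-dimensional system and may therefore depend on $K$; the paper's proof passes over this silently, and your reading that $C_2(T)$ may depend on the resolution agrees with the paper's own later use of $C_{K,p}(T)$ in Theorem~\ref{thm:full-error-Linf}.
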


\begin{proof}
We decompose
\[
  u_{K,\Delta t}^n-u(\cdot,t_n)
  =\bigl(u_{K,\Delta t}^n-u_K(t_n)\bigr)
   +\bigl(u_K(t_n)-u(\cdot,t_n)\bigr).
\]
Taking $L^2(\Omega)$ norms and using the triangle inequality, we obtain
\[
  \|u_{K,\Delta t}^n-u(\cdot,t_n)\|_{L^2}
  \le \|u_{K,\Delta t}^n-u_K(t_n)\|_{L^2}
   + \|u_K(t_n)-u(\cdot,t_n)\|_{L^2}.
\]
The first term is bounded by lemma~\ref{lem:time-error}:
\[
  \|u_{K,\Delta t}^n-u_K(t_n)\|_{L^2}
  \le C_3(T)(\Delta t)^p.
\]
The second term is bounded by lemma~\ref{error}:
\[
  \|u_K(t_n)-u(\cdot,t_n)\|_{L^2}
  \le C_4(T)(1+K)^2e^{-\delta_{\min}K}.
\]
Renaming constants yields \eqref{eq:full-error-3d}.
\end{proof}

\begin{corollary}[Resolution condition]\label{cor:resolution}
Let $\varepsilon>0$ be a prescribed tolerance.
If $K$ and $\Delta t$ are chosen such that
\[
  C_1(T)(1+K)^2e^{-\delta_{\min}K}\le \frac{\varepsilon}{2},
  \qquad
  C_2(T)(\Delta t)^p\le \frac{\varepsilon}{2},
\]
then for all $t_n\in[0,T]$,
\[
  \|u_{K,\Delta t}^n-u(\cdot,t_n)\|_{L^2(\Omega)}\le \varepsilon.
\]
In particular, up to multiplicative constants, it suffices to take
\[
  K \gtrsim \frac{1}{\delta_{\min}}\log\frac{C}{\varepsilon},
  \qquad
  \Delta t \lesssim \left(\frac{\varepsilon}{C'}\right)^{1/p}.
\]
\end{corollary}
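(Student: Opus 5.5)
The first claim follows directly from Theorem~\ref{thm:full-error-3d}. For any $t_n=n\Delta t\le T$, estimate \eqref{eq:full-error-3d} bounds the fully discrete error by the sum of a spatial term $C_1(T)(1+K)^2e^{-\delta_{\min}K}$ and a temporal term $C_2(T)(\Delta t)^p$. The constants here are exactly those in the hypothesis of the corollary. Under the two assumed inequalities each term is at most $\varepsilon/2$, so the error is at most $\varepsilon$. Since the constants depend only on $T$ and not on $n$, $K$ or $\Delta t$, the bound holds uniformly for all $t_n\in[0,T]$.

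For the explicit sufficient conditions, the temporal one is immediate. It suffices that $\Delta t\le(\varepsilon/(2C_2(T)))^{1/p}$, which is the stated $\Delta t\lesssim(\varepsilon/C')^{1/p}$ with $C'=2C_2(T)$.

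The spatial condition is the only step needing care, because of the polynomial prefactor $(1+K)^2$. I would handle it as in the proof of Lemma~\ref{lem:spectral-trunc-Linf}. Fix $c\in(0,\delta_{\min})$, for instance $c=\delta_{\min}/2$. Then
\[
(1+K)^2e^{-(\delta_{\min}-c)K}\le A_c:=\sup_{s\ge0}(1+s)^2e^{-(\delta_{\min}-c)s}<\infty,
\]
so that $C_1(T)(1+K)^2e^{-\delta_{\min}K}\le C_1(T)A_c\,e^{-cK}$. It therefore suffices that $K\ge c^{-1}\log(2C_1(T)A_c/\varepsilon)$. With $c$ a fixed multiple of $\delta_{\min}$, this reads $K\gtrsim\delta_{\min}^{-1}\log(C/\varepsilon)$ up to multiplicative constants.

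Alternatively, one can solve $K\ge\delta_{\min}^{-1}\bigl(\log(2C_1/\varepsilon)+2\log(1+K)\bigr)$. This implicit condition is satisfied for $K$ a constant multiple of $\delta_{\min}^{-1}\log(C/\varepsilon)$ once $\varepsilon$ is small, since the logarithmic term grows more slowly than $K$. The only subtlety is that the "$\gtrsim$" absorbs this logarithmic correction into the constants, and I would state explicitly that this is the sense in which the condition is meant.
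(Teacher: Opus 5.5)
Your proposal is correct and follows the paper's proof: you split the bound of Theorem~\ref{thm:full-error-3d} into two halves of size $\varepsilon/2$, solve the temporal condition for $\Delta t$ directly, and absorb the polynomial factor $(1+K)^2$ into the exponential to get the logarithmic condition on $K$. The only difference is that you make the absorption explicit, via $A_c$ with $c=\delta_{\min}/2$ as in Lemma~\ref{lem:spectral-trunc-Linf}, whereas the paper simply asserts that the polynomial factor can be absorbed into the constants.
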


\begin{proof}
We have from Theorem~\ref{thm:full-error-3d},
\[
  \|u_{K,\Delta t}^n-u(\cdot,t_n)\|_{L^2}
  \le C_1(T)(1+K)^2e^{-\delta_{\min}K}+C_2(T)(\Delta t)^p.
\]
If each term on the right-hand side is at most $\varepsilon/2$, then the sum is
at most $\varepsilon$.

The second condition gives immediately
\[
  \Delta t \le \left(\frac{\varepsilon}{2C_2(T)}\right)^{1/p}.
\]

The factor $(1+K)^2$ is only polynomial in $K$, whereas
$e^{-\delta_{\min}K}$ decays exponentially. Hence, up to constants, the condition
\[
  C_1(T)(1+K)^2e^{-\delta_{\min}K}\le \frac{\varepsilon}{2}
\]
is ensured by taking
\[
  K \gtrsim \frac{1}{\delta_{\min}}\log\frac{C}{\varepsilon},
\]
absorbing numerical constants into $C$ and $C'$ yields the stated form.
\end{proof}

\subsection{The fully discrete $L^\infty$ error and convergence of diagnostics}

We obtain a fully discrete error bound.

\begin{lemma}[Semidiscrete $H^s$ spectral error]\label{lem:Hs-semidiscrete}
Let $s>\frac52$. We assume assumption~\ref{ass:a}. Let $u$ be the exact solution of
\eqref{eq:PDE} on $[0,T]$, and let $u_K$ be the semidiscrete Fourier
approximation. We assume further that there exists $M_T>0$, independent of $K$,
such that
\[
\sup_{0\le t\le T}\Bigl(\|u(\cdot,t)\|_{H^{s+1}(\Omega)}
+\|u_K(\cdot,t)\|_{H^{s+1}(\Omega)}\Bigr)\le M_T .
\]

Let $\mathcal F$ be a forcing operator such that
\[
\mathcal F:H^{s+1}(\Omega)\to H^{s-1}(\Omega),
\]
and assume that $\mathcal F$ is locally Lipschitz on bounded subsets of
$H^{s+1}(\Omega)$ in the sense that there exists a constant $L_T>0$ such that
\[
\|\mathcal F(v)-\mathcal F(w)\|_{H^{s-1}(\Omega)}
\le
L_T\|v-w\|_{H^s(\Omega)}
\]
for all $v,w$ with
\[
\|v\|_{H^{s+1}(\Omega)}+\|w\|_{H^{s+1}(\Omega)}\le 2M_T.
\]

Then for every $0<c<\delta_{\min}$ there exists a constant $C_s(T)>0$ such that
\[
\sup_{0\le t\le T}\|u_K(\cdot,t)-u(\cdot,t)\|_{H^s(\Omega)}
\le C_s(T)e^{-cK}.
\]
Consequently, by Sobolev embedding $H^s(\Omega)\hookrightarrow L^\infty(\Omega)$,
\[
\sup_{0\le t\le T}\|u_K(\cdot,t)-u(\cdot,t)\|_{L^\infty(\Omega)}
\le C_s(T)e^{-cK}.
\]
\end{lemma}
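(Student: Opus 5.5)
We mirror the $L^2$ argument of Lemma~\ref{error}, now at the $H^s$ level. Write $u_K-u=e_K-r_K$ with $e_K:=u_K-P_Ku\in\operatorname{Ran}(P_K)$ and $r_K:=R_Ku$. The truncation part is handled directly by Lemma~\ref{lem:analytic-decay} and the shell counting of Lemma~\ref{lem:spectral-trunc-3d}:
\[
\|r_K\|_{H^s}^2\le C^*(T)^2\sum_{|k|>K}(1+|k|^2)^{s}e^{-2\delta_{\min}|k|}\le C(1+K)^{2s+2}e^{-2\delta_{\min}K}.
\]
For any $c<\delta_{\min}$ this is $\le C_s(T)^2e^{-2cK}$. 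The same bound holds for $\|r_K\|_{H^{s+1}}$, with a different polynomial factor that is again absorbed into the exponential.

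For $e_K$ we take the $H^s$ inner product of its equation with $e_K$. Writing $\Lambda^s=(1-\Delta)^{s/2}$, and using that $P_K$ and $P$ commute with $\Lambda^s$ and that $e_K$ lies in $\operatorname{Ran}(P_K)$, we get
\[
\tfrac12\tfrac{d}{dt}\|e_K\|_{H^s}^2+\nu\|\nabla e_K\|_{H^s}^2=-\langle\Lambda^s(B(u_K,u_K)-B(u,u)),\Lambda^s e_K\rangle+\langle\Lambda^{s-1}(\mathcal F(u_K)-\mathcal F(u)),\Lambda^{s+1}e_K\rangle.
\]

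The forcing term is bounded by $L_T(\|e_K\|_{H^s}+\|r_K\|_{H^s})\|e_K\|_{H^{s+1}}$. Young's inequality then absorbs $\frac{\nu}{4}\|\nabla e_K\|_{H^s}^2$ and leaves $C\|e_K\|_{H^s}^2+C\|r_K\|_{H^s}^2$. The Lipschitz hypothesis applies because both $u_K$ and $u$ lie in the $2M_T$ ball by the uniform $H^{s+1}$ assumption.

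For the nonlinearity we use the same expansion as in Lemma~\ref{error}, with $u_K=P_Ku+e_K$ and $u=P_Ku+r_K$.

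\textbf{Main obstacle.} The terms $\langle\Lambda^sB(a,e_K),\Lambda^se_K\rangle$ with $a=P_Ku$ or $a=e_K$ no longer vanish, since $\Lambda^s$ does not commute with $a\cdot\nabla$. We handle them with the Kato--Ponce commutator estimate
\[
|\langle\Lambda^s(a\cdot\nabla e)-a\cdot\nabla\Lambda^se,\Lambda^se\rangle|\le C(\|\nabla a\|_{L^\infty}\|e\|_{H^s}+\|a\|_{H^s}\|\nabla e\|_{L^\infty})\|e\|_{H^s},
\]
together with the cancellation $b(a,\Lambda^se,\Lambda^se)=0$. Because $s>5/2$, the embedding $H^{s}\hookrightarrow W^{1,\infty}$ gives $\|\nabla e_K\|_{L^\infty}\lesssim\|e_K\|_{H^s}$. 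The quantities $\|\nabla P_Ku\|_{L^\infty}$ and $\|P_Ku\|_{H^s}$ are bounded uniformly by Assumption~\ref{ass:a}, and $\|e_K\|_{H^s}\le 2M_T$ by hypothesis. Hence the quadratic term $B(e_K,e_K)$ also contributes only $C(M_T)\|e_K\|_{H^s}^2$. This is exactly where the uniform $H^{s+1}$ bound on $u_K$ is needed, since it prevents a superlinear Gronwall inequality.

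The terms containing $r_K$ are estimated using the algebra property of $H^s$ and the product bound $\|B(a,b)\|_{H^s}\lesssim\|a\|_{H^s}\|b\|_{H^{s+1}}$. These give contributions $\le C(M_T)(\|r_K\|_{H^{s}}+\|r_K\|_{H^{s+1}})\|e_K\|_{H^s}$, and the $r_K$ factor is exponentially small by the first step.

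Collecting the estimates gives
\[
\tfrac{d}{dt}\|e_K\|_{H^s}^2\le C(T)\|e_K\|_{H^s}^2+C(T)e^{-2cK}.
\]
Since $e_K(0)=0$, Gronwall's inequality yields $\|e_K\|_{H^s}\le Ce^{-cK}$ uniformly on $[0,T]$. Adding the bound on $\|r_K\|_{H^s}$ gives the $H^s$ estimate. Finally, since $s>3/2$, the Sobolev embedding $H^s(\Omega)\hookrightarrow L^\infty(\Omega)$ gives the $L^\infty$ statement after enlarging $C_s(T)$.
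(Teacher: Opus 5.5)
Your argument is correct, but it treats the nonlinearity by a different route than the paper. The paper uses no commutator and no transport cancellation. It treats the whole difference $B(u_K,u_K)-B(u,u)$ like the forcing, as a source $G$ in $H^{s-1}$. It pairs via $|\langle\Lambda^s G,\Lambda^s e_K\rangle|\le\|G\|_{H^{s-1}}\|e_K\|_{H^{s+1}}$ and absorbs $\frac{\nu}{4}\|e_K\|_{H^{s+1}}^2$ into the viscous term. It then bounds the source by the product estimate $\|B(a,b)\|_{H^{s-1}}\le C_s\|a\|_{H^s}\|b\|_{H^s}$, which with the uniform bounds on $u$ and $u_K$ gives $C(T)(\|e_K\|_{H^s}+\|r_K\|_{H^s})$ at once.

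That route is shorter. It needs neither $\nabla\cdot u_K=0$ nor Kato--Ponce, and it avoids your six-term expansion into $P_Ku$, $e_K$, $r_K$ pieces. Its price is that every nonlinear constant carries a factor $1/\nu$.

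Your route uses $b(a,\Lambda^s e_K,\Lambda^s e_K)=0$ together with $H^s\hookrightarrow W^{1,\infty}$. This controls the transport terms without the viscous gain; only the forcing term still needs it, since $\mathcal F$ is merely Lipschitz into $H^{s-1}$. That is the kind of estimate you would want for bounds uniform as $\nu\to0$, at the cost of heavier machinery and reliance on the divergence-free structure being preserved by the semidiscrete scheme.

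One small omission: you never list the term $\langle\Lambda^s B(e_K,P_Ku),\Lambda^s e_K\rangle$. Your own product bound handles it: $\|B(e_K,P_Ku)\|_{H^s}\lesssim\|e_K\|_{H^s}\|P_Ku\|_{H^{s+1}}\le CM_T\|e_K\|_{H^s}$, so it enters the same Gronwall inequality.
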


\begin{proof}
Set
\[
e_K:=u_K-P_Ku,
\qquad
r_K:=u-P_Ku.
\]
Then
\[
u_K-u=e_K-r_K.
\]

Write
\[
B(a,b):=P\bigl((a\cdot\nabla)b\bigr).
\]
Since $P_K$ commutes with $\Delta$ and $P$, subtracting the projected exact
equation from the semidiscrete one yields
\[
\partial_t e_K
=
\nu\Delta e_K
-
P_K\bigl(B(u_K,u_K)-B(u,u)\bigr)
+
P_KP\bigl(\mathcal F(u_K)-\mathcal F(u)\bigr).
\]

We apply $\Lambda^s=(I-\Delta)^{s/2}$ and take the $L^2$ inner product with
$\Lambda^s e_K$. By self-adjointness of $\Lambda$,
\[
\bigl|\langle \Lambda^s G,\Lambda^s e_K\rangle\bigr|
=
\bigl|\langle \Lambda^{s-1}G,\Lambda^{s+1}e_K\rangle\bigr|
\le \|G\|_{H^{s-1}}\|e_K\|_{H^{s+1}},
\]
and hence by Young's inequality,
\[
\bigl|\langle \Lambda^s G,\Lambda^s e_K\rangle\bigr|
\le \frac{\nu}{4}\|e_K\|_{H^{s+1}}^2 + C_\nu\|G\|_{H^{s-1}}^2 .
\]

The standard product estimates for $s>\frac52$ give
\[
\|B(a,b)\|_{H^{s-1}}
\le C_s \|a\|_{H^s}\|b\|_{H^s}.
\]
Moreover, by the local Lipschitz assumption on $\mathcal F$ and the uniform
$H^{s+1}$ bound,
\[
\|\mathcal F(u_K)-\mathcal F(u)\|_{H^{s-1}}
\le C_{T}\|u_K-u\|_{H^s}.
\]
Since
\[
u_K-u=e_K-r_K,
\]
it follows that
\[
\|B(u_K,u_K)-B(u,u)\|_{H^{s-1}}
+
\|\mathcal F(u_K)-\mathcal F(u)\|_{H^{s-1}}
\le C(T)\bigl(\|e_K\|_{H^s}+\|r_K\|_{H^s}\bigr).
\]
Therefore,
\[
\frac12\frac{d}{dt}\|e_K\|_{H^s}^2
+\nu\|e_K\|_{H^{s+1}}^2
\le
C(T)\|e_K\|_{H^s}^2
+
C(T)\bigl(\|r_K\|_{H^s}^2+\|r_K\|_{H^{s+1}}^2\bigr).
\]

By analyticity of $u$, the Fourier tail decays exponentially in every Sobolev
norm: for each integer $m\ge0$,
\[
\|r_K(\cdot,t)\|_{H^m(\Omega)}
\le C_m(T)(1+K)^m e^{-\delta_{\min}K}.
\]
Hence, for any $0<c<\delta_{\min}$,
\[
\|r_K(\cdot,t)\|_{H^s}+\|r_K(\cdot,t)\|_{H^{s+1}}
\le C_s(T)e^{-cK},
\qquad 0\le t\le T.
\]

Since $e_K(0)=0$, Gronwall's inequality yields
\[
\sup_{0\le t\le T}\|e_K(\cdot,t)\|_{H^s}
\le C_s(T)e^{-cK}.
\]
Then
\[
\|u_K(\cdot,t)-u(\cdot,t)\|_{H^s}
\le \|e_K(\cdot,t)\|_{H^s}+\|r_K(\cdot,t)\|_{H^s},
\]
so
\[
\sup_{0\le t\le T}\|u_K(\cdot,t)-u(\cdot,t)\|_{H^s}
\le C_s(T)e^{-cK}.
\]
The $L^\infty$ estimate follows from the embedding
$H^s(\Omega)\hookrightarrow L^\infty(\Omega)$ for $s>\frac32$.
\end{proof}

\begin{theorem}[The fully discrete $H^s$ error]\label{thm:full-error-Linf}
%\label{thm:fully_discrete_Hs}
Let $s > \frac52$. We assume that the exact solution $u$ satisfies the
regularity and analyticity hypotheses stated above, and that the
semi-discrete spectral solution $u_K$ satisfies
\begin{equation}
\sup_{0 \le t \le T} \|u_K(t)-u(t)\|_{H^s}
\le C_s(T)e^{-cK}
\label{eq:semidiscrete_Hs_error}
\end{equation}
by lemma \ref{lem:Hs-semidiscrete}.
We assume further that for each fixed $K$, the explicit Runge-Kutta method
of order $p$ with one-step map $\Phi_{\Delta t}^{(K)}$ satisfies
\begin{equation}
\|\Phi_{\Delta t}^{(K)}(v)-\Phi_{\Delta t}^{(K)}(w)\|_{H^s}
\le (1+L_K\Delta t)\|v-w\|_{H^s},
\label{eq:RK_stability_Hs}
\end{equation}
for all $v,w$ in a bounded $H^s$-neighborhood of the semi-discrete trajectory,
and
\begin{equation}
\|\Phi_{\Delta t}^{(K)}(u_K(t_n)) - u_K(t_{n+1})\|_{H^s}
\le J_K \Delta t^{p+1},
\qquad t_n=n\Delta t \le T.
\label{eq:RK_local_error_Hs}
\end{equation}
Define the fully discrete approximation by
\begin{equation}
u_{K,\Delta t}^{n+1} = \Phi_{\Delta t}^{(K)}(u_{K,\Delta t}^n),
\qquad
u_{K,\Delta t}^0 = P_K u_0.
\label{eq:fully_discrete_scheme}
\end{equation}
Then for every $t_n=n\Delta t \le T$,
\begin{equation}
\|u_{K,\Delta t}^n-u(t_n)\|_{H^s}
\le C_s(T)e^{-cK} + C_{K,p}(T)\Delta t^p,
\label{eq:fully_discrete_Hs_bound}
\end{equation}
where
\begin{equation}
C_{K,p}(T)=
\begin{cases}
\dfrac{J_K}{L_K}\bigl(e^{L_KT}-1\bigr), & L_K>0,\\[1ex]
J_KT, & L_K=0.
\end{cases}
\label{eq:CKp_definition}
\end{equation}
If $\Delta t=\Delta t(K)$ is chosen so that
\begin{equation}
C_{K,p}(T)\,\Delta t(K)^p \to 0
\qquad \text{as } K\to\infty,
\label{eq:time_step_condition}
\end{equation}
then
\begin{equation}
\max_{t_n\le T}\|u_{K,\Delta t(K)}^n-u(t_n)\|_{H^s}\to 0
\qquad \text{as } K\to\infty.
\label{eq:fully_discrete_Hs_convergence}
\end{equation}

\begin{proof}
We define the time-discretization error by
\[
e_n := u_{K,\Delta t}^n-u_K(t_n).
\]
By \eqref{eq:fully_discrete_scheme},
\[
e_{n+1}
=
\Phi_{\Delta t}^{(K)}(u_{K,\Delta t}^n)-u_K(t_{n+1}).
\]
By adding and subtracting $\Phi_{\Delta t}^{(K)}(u_K(t_n))$, we obtain
\[
e_{n+1}
=
\Bigl(
\Phi_{\Delta t}^{(K)}(u_{K,\Delta t}^n)
-\Phi_{\Delta t}^{(K)}(u_K(t_n))
\Bigr)
+
\Bigl(
\Phi_{\Delta t}^{(K)}(u_K(t_n))
-u_K(t_{n+1})
\Bigr).
\]
Taking the $H^s$ norm and using \eqref{eq:RK_stability_Hs} and
\eqref{eq:RK_local_error_Hs}, we get
\[
\|e_{n+1}\|_{H^s}
\le
(1+L_K\Delta t)\|e_n\|_{H^s} + J_K\Delta t^{p+1}.
\]
Since
\[
e_0 = u_{K,\Delta t}^0-u_K(0)=P_Ku_0-P_Ku_0=0,
\]
the discrete Gronwall inequality yields
\[
\|e_n\|_{H^s}\le C_{K,p}(T)\Delta t^p,
\qquad t_n\le T,
\]
with $C_{K,p}(T)$ given by \eqref{eq:CKp_definition}.

Now use the triangle inequality:
\[
\|u_{K,\Delta t}^n-u(t_n)\|_{H^s}
\le
\|u_{K,\Delta t}^n-u_K(t_n)\|_{H^s}
+
\|u_K(t_n)-u(t_n)\|_{H^s}.
\]
Combining this with \eqref{eq:semidiscrete_Hs_error}, we obtain
\[
\|u_{K,\Delta t}^n-u(t_n)\|_{H^s}
\le
C_{K,p}(T)\Delta t^p + C_s(T)e^{-cK},
\]
which proves \eqref{eq:fully_discrete_Hs_bound}.

Then if \eqref{eq:time_step_condition} holds, then both terms on the
right-hand side of \eqref{eq:fully_discrete_Hs_bound} converge to zero,
and therefore \eqref{eq:fully_discrete_Hs_convergence} follows.
\end{proof}
\end{theorem}

\begin{corollary}[The fully discrete $L^\infty$ convergence]
\label{cor:fully_discrete_Linf}
Let $s>\frac52$ and assume the hypotheses of
Theorem~\ref{thm:full-error-Linf}. Then, by Sobolev embedding
$H^s(\mathbb{T}^3)\hookrightarrow L^\infty(\mathbb{T}^3)$, there exists a
constant $C_{\infty,s}>0$ such that for every $t_n=n\Delta t\le T$,
\[
\|u_{K,\Delta t}^n-u(t_n)\|_{L^\infty}
\le
C_{\infty,s}\|u_{K,\Delta t}^n-u(t_n)\|_{H^s}.
\]
Hence,
\[
\|u_{K,\Delta t}^n-u(t_n)\|_{L^\infty}
\le
C_{\infty,s}C_s(T)e^{-cK}
+
C_{\infty,s}C_{K,p}(T)\Delta t^p.
\]
In particular, if $\Delta t=\Delta t(K)$ is chosen so that
\[
C_{K,p}(T)\Delta t(K)^p \to 0
\qquad \text{as } K\to\infty,
\]
then
\[
\max_{t_n\le T}\|u_{K,\Delta t(K)}^n-u(t_n)\|_{L^\infty}\to 0
\qquad \text{as } K\to\infty.
\]

\begin{proof}
This follows immediately from
Theorem~\ref{thm:full-error-Linf} and the Sobolev embedding
$H^s(\mathbb{T}^3)\hookrightarrow L^\infty(\mathbb{T}^3)$.
\end{proof}
\end{corollary}

With this $L^\infty$ convergence in hand, we can now justify the
convergence of a broad class of blow-up diagnostics.

\begin{corollary}[Convergence of velocity based diagnostics]\label{ass:H3c-diagnostic}
Let $s>\frac52$ and assume the hypotheses of Theorem~\ref{thm:full-error-Linf}. Let
$\Delta t=\Delta t(K)$ be chosen so that
\[
C_{K,p}(T)\,\Delta t(K)^p \to 0
\qquad\text{as }K\to\infty.
\]
Let $\Phi$ be a functional defined on sufficiently regular velocity fields and assume
that there exists $L>0$ such that
\[
|\Phi(v)-\Phi(w)|
\le L\|v-w\|_{L^\infty(\mathbb{T}^3)}
\]
for all $v,w$ in the relevant class. Define
\[
\Phi_{K}^n:=\Phi(u_{K,\Delta t(K)}^n),
\qquad
\Phi(t):=\Phi(u(\cdot,t)).
\]
Then for every $T>0$,
\[
\max_{t_n\le T} |\Phi_K^n-\Phi(t_n)| \to 0
\qquad\text{as }K\to\infty.
\]
In particular, this applies to the diagnostic
\[
\Phi(v)=\|v\|_{L^\infty(\mathbb{T}^3)}.
\]
\end{corollary}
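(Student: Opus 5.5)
The plan is to read off the statement from Corollary~\ref{cor:fully_discrete_Linf} via the Lipschitz hypothesis on $\Phi$. Fix $T>0$ and let $\Delta t=\Delta t(K)$ satisfy $C_{K,p}(T)\Delta t(K)^p\to0$. For each $t_n\le T$, the Lipschitz property gives
\[
|\Phi_K^n-\Phi(t_n)|
=\bigl|\Phi(u_{K,\Delta t(K)}^n)-\Phi(u(\cdot,t_n))\bigr|
\le L\,\|u_{K,\Delta t(K)}^n-u(\cdot,t_n)\|_{L^\infty(\mathbb{T}^3)}.
\]
Corollary~\ref{cor:fully_discrete_Linf} then bounds the right-hand side by
\[
L\,C_{\infty,s}\bigl(C_s(T)e^{-cK}+C_{K,p}(T)\Delta t(K)^p\bigr).
\]
This bound does not depend on $n$. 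Taking the maximum over $t_n\le T$ and letting $K\to\infty$, the first term vanishes because $c>0$, and the second vanishes by the time-step hypothesis. That gives $\max_{t_n\le T}|\Phi_K^n-\Phi(t_n)|\to0$.

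The only point needing care is that the Lipschitz bound must apply to the pair $(u_{K,\Delta t(K)}^n,u(\cdot,t_n))$, i.e.\ both fields lie in ``the relevant class.'' The exact solution is analytic under Assumption~\ref{ass:a}, so it qualifies. The discrete iterates are trigonometric polynomials and hence smooth. By Theorem~\ref{thm:full-error-Linf} they also stay in an $H^s$-neighborhood of $u$ of radius tending to zero, so for large $K$ they lie in any bounded $H^s$ class containing the trajectory of $u$. I would state this explicitly and interpret ``relevant class'' as such a bounded set.

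For the particular diagnostic $\Phi(v)=\|v\|_{L^\infty}$ no extra assumption is needed. The reverse triangle inequality gives $\bigl|\|v\|_{L^\infty}-\|w\|_{L^\infty}\bigr|\le\|v-w\|_{L^\infty}$ for all bounded $v,w$, so $L=1$ on the whole space, and the general argument applies verbatim. The main obstacle is only this bookkeeping of the domain of $\Phi$; the quantitative content is already supplied by Corollary~\ref{cor:fully_discrete_Linf}.
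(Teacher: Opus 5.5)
Your proposal is correct and follows the paper's argument: the paper passes from the $H^s$ convergence of Theorem~\ref{thm:full-error-Linf}, through the Sobolev embedding, to uniform $L^\infty$ convergence, and then applies the Lipschitz bound on $\Phi$; citing Corollary~\ref{cor:fully_discrete_Linf} packages exactly those first two steps. Your check of the domain of $\Phi$ is extra care the paper does not take, but state it precisely: for large $K$ the iterates are only guaranteed to lie in an $H^s$-neighborhood of the trajectory of $u$, not in an arbitrary bounded set that contains that trajectory.
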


\begin{proof}
By Theorem~\ref{thm:full-error-Linf},
\[
\max_{t_n\le T}\|u_{K,\Delta t(K)}^n-u(\cdot,t_n)\|_{H^s}\to0
\qquad\text{as }K\to\infty.
\]
Since $s>\frac52$, Sobolev embedding gives
\[
H^s(\mathbb{T}^3)\hookrightarrow L^\infty(\mathbb{T}^3),
\]
hence
\[
\max_{t_n\le T}\|u_{K,\Delta t(K)}^n-u(\cdot,t_n)\|_{L^\infty}\to0.
\]
Therefore, by the Lipschitz continuity of $\Psi$ with respect to the $L^\infty$ norm,
\[
\max_{t_n\le T} |\Phi_K^n-\Phi(t_n)|
\le
L\max_{t_n\le T}\|u_{K,\Delta t(K)}^n-u(\cdot,t_n)\|_{L^\infty}\to0.
\]
\end{proof}

\begin{assumption}[Convergence for vorticity based diagnostics]
We assume in addition that for every $T>0$,
\[
\max_{t_n\le T}
\|\omega_{K,\Delta t(K)}^n-\omega(\cdot,t_n)\|_{L^\infty(\mathbb{T}^3)}
\to0
\qquad\text{as }K\to\infty,
\]
where
\[
\omega=\nabla\times u,
\qquad
\omega_{K,\Delta t(K)}^n=\nabla\times u_{K,\Delta t(K)}^n.
\]
Then any diagnostic depending Lipschitz continuously on the vorticity in the
$L^\infty$ norm also converges along the same sequence.
\end{assumption}

\subsection{Energy framework and conditional regularity}
We now formulate energy-based framework for spectrally accurate numerical approximations to possible loss of regularity of the PDE. This provides the key closure needed to connect the numerical energy balance to conditional blowup diagnostics.

\begin{remark}[Closure of a general forcing in the energy balance]\label{rem:closure}
Let $f=f(\cdot,t)$ be a general external forcing. The
power input is naturally interpreted as the duality pairing
\[
\langle f(\cdot,t),u(\cdot,t)\rangle_{H^{-1},H^1},
\]
which requires $f(\cdot,t)\in H^{-1}(\Omega)$. Thus a sufficient closure condition
for the energy balance is
\[
f\in L^1_{\mathrm{loc}}([0,T);H^{-1}(\Omega)),
\qquad
u\in L^\infty_{\mathrm{loc}}([0,T);H^1(\Omega)),
\]
so that the power input is well defined almost everywhere in time.
If, in addition,
\[
\bigl|\langle f(\cdot,t),u(\cdot,t)\rangle_{H^{-1},H^1}\bigr|
\in L^1(0,T),
\]
then the forcing contribution can be incorporated into the energy inequality.
\end{remark}

\begin{lemma}[Energy input bound for a general forcing]\label{lem:force-closure-zero}
Let $\Omega=\mathbb T^3$. We assume
\[
f(\cdot,t)\in H^{-1}(\Omega),
\qquad
u(\cdot,t)\in H^1(\Omega)
\]
for almost every $t<T_1$. Then the power input is well defined and satisfies
\[
\bigl|\langle f(\cdot,t),u(\cdot,t)\rangle_{H^{-1},H^1}\bigr|
\le
\|f(\cdot,t)\|_{H^{-1}}\|u(\cdot,t)\|_{H^1}.
\]
In particular, if
\[
\|f(\cdot,t)\|_{H^{-1}}\|u(\cdot,t)\|_{H^1}\in L^1(0,T),
\]
then
\[
P_{\mathrm{in}}(t):=\langle f(\cdot,t),u(\cdot,t)\rangle_{H^{-1},H^1}
\]
belongs to $L^1(0,T)$.
\end{lemma}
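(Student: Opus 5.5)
The plan is to treat this as a direct consequence of the definition of $H^{-1}(\Omega)$ as the dual of $H^1(\Omega)$ on the torus, followed by a measurability and integrability check in time. On $\mathbb T^3$ we identify $H^{-1}$ with the space of distributions $f$ whose Fourier coefficients satisfy $\sum_k (1+|k|^2)^{-1}|\widehat f_k|^2<\infty$. The pairing with $u\in H^1$ is then
\[
\langle f,u\rangle_{H^{-1},H^1}=(2\pi)^3\sum_{k\in\mathbb Z^3}\widehat f_k\,\overline{\widehat u_k},
\]
with componentwise summation for vector fields and the same normalization used for the norms. Fixing $t$ outside the exceptional null set, we will first show that this series converges absolutely. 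To do so we write each term as
\[
\bigl((1+|k|^2)^{-1/2}\widehat f_k\bigr)\bigl((1+|k|^2)^{1/2}\overline{\widehat u_k}\bigr)
\]
and apply Cauchy--Schwarz in $\ell^2(\mathbb Z^3)$. This gives at once
\[
|\langle f(\cdot,t),u(\cdot,t)\rangle|\le\|f(\cdot,t)\|_{H^{-1}}\|u(\cdot,t)\|_{H^1}.
\]
The same conclusion follows abstractly from the definition of the dual norm, $\|f\|_{H^{-1}}=\sup_{\|v\|_{H^1}\le1}|\langle f,v\rangle|$. Either way, the pointwise-in-time bound holds and the power input is well defined for almost every $t<T_1$.

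For the second assertion, we need $P_{\rm in}$ to be measurable and then dominate it. The pointwise bound shows that $|P_{\rm in}(t)|$ is dominated a.e. by the function $\|f(\cdot,t)\|_{H^{-1}}\|u(\cdot,t)\|_{H^1}$, which is integrable by hypothesis. Hence $\int_0^T|P_{\rm in}|\,dt<\infty$ as soon as $P_{\rm in}$ is measurable.

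The only genuine subtlety, and the step I expect to need care, is this measurability, since the statement only gives pointwise-in-time membership. I will make the standing interpretation implicit in Remark~\ref{rem:closure} explicit: $f$ and $u$ are strongly (Bochner) measurable as maps into $H^{-1}$ and $H^1$ respectively. Under that interpretation each Fourier coefficient $t\mapsto\widehat f_k(t)\overline{\widehat u_k(t)}$ is measurable, because coefficient extraction is a continuous linear functional. The partial sums of the series above are then measurable, and they converge pointwise a.e. to $P_{\rm in}(t)$ by the absolute convergence established in the first step. Therefore $P_{\rm in}$ is measurable as an a.e. limit of measurable functions, and domination gives $P_{\rm in}\in L^1(0,T)$.

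The same measurability reasoning also shows that $t\mapsto\|f\|_{H^{-1}}\|u\|_{H^1}$ is measurable, so the hypothesis is meaningful as stated.
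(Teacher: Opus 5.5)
Your proposal is correct and follows the same route as the paper: the pointwise bound is the definition of the dual norm (your Fourier--Cauchy--Schwarz computation on $\mathbb T^3$ just makes this explicit), and integrability then follows by domination. The one addition is your measurability argument for $t\mapsto P_{\rm in}(t)$, which the paper's one-line proof takes for granted. That step genuinely needs an assumption such as the Bochner measurability you take from Remark~\ref{rem:closure}, because pointwise membership together with integrability of $\|f\|_{H^{-1}}\|u\|_{H^1}$ does not by itself make $P_{\rm in}$ measurable.
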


\begin{proof}
This is immediate from the definition of the dual norm
\[
\bigl|\langle f(\cdot,t),u(\cdot,t)\rangle_{H^{-1},H^1}\bigr|
\le
\|f(\cdot,t)\|_{H^{-1}}\|u(\cdot,t)\|_{H^1}.
\]
The integrability conclusion follows directly.
\end{proof}

We now present an abstract energy based framework for conditional
regularity and blowup detection from spectrally accurate numerical
solutions.
The exact solution cannot be extended as a smooth solution beyond $t=T^*$.

The functionals for the Navier-Stokes equations 
and the corresponding energy balance equality \cite{e2011principles} 
for smooth solutions are classical~\cite{TemamNS,TemamNF}.
For Leray-Hopf weak solutions, the global energy inequality
goes back to~\cite{Leray1934}, the modern expositions in
~\cite{RobinsonNSReg, BerselliEnergyIneq}.
The abstract which combines spectrally accurate
numerical approximations with energy inequalities to draw conclusions about
regularity or blowup is in the spirit of the
a posteriori regularity approach for the
Navier-Stokes equations.

\begin{theorem}[The energy based conditional blowup]
\label{thm:energy-based}
Let $u$ be a solution of an evolution equation for instance, the
incompressible Navier-Stokes equations on a periodic box with values
in a Hilbert space $H$, defined on a maximal interval $[0,T_{\max})$.
We assume the following hold.

\begin{enumerate}
  \item[(H1)] \textbf{Energy functional energy identity inequality.}
    There exists a nonnegative energy functional
    $\mathcal{E}[u](t)$ and a nonnegative dissipation functional
    $\mathcal{D}[u](t)$, together with a power input
    $\mathcal{P}[u](t)$, such that for all
    $0 \le t_0 < t_1 < T_{\max}$ we have
    \begin{equation}\label{eq:cont-energy-ineq}
      \mathcal{E}[u](t_1) + \int_{t_0}^{t_1} \mathcal{D}[u](s)\,ds
      \;\le\;
      \mathcal{E}[u](t_0) + \int_{t_0}^{t_1} \mathcal{P}[u](s)\,ds.
    \end{equation}
    If $u$ is smooth on $[t_0,t_1]$, then
    \eqref{eq:cont-energy-ineq} holds with equality, and
    $\mathcal{E}[u],\mathcal{D}[u],\mathcal{P}[u]$ are continuous
    and finite on $[t_0,t_1]$.

  \item[(H2)] \textbf{The fully discrete spectral approximations.}
    There exists a fully discrete approximation
    $u_{K,\Delta t}^n \in H$ at times $t_n=n\Delta t$ for each spectral resolution $K\in\mathbb{N}$ and time step
    $\Delta t>0$. Define the
    discrete energy, dissipation and power by
    \[
      \mathcal{E}_K^n := \mathcal{E}[u_{K,\Delta t}^n],\qquad
      \mathcal{D}_K^n := \mathcal{D}[u_{K,\Delta t}^n],\qquad
      \mathcal{P}_K^n := \mathcal{P}[u_{K,\Delta t}^n].
    \]
    assume the scheme satisfies a discrete energy balance of the form
    \begin{equation}\label{eq:disc-energy-balance}
      \mathcal{E}_K^{n+1} - \mathcal{E}_K^{n}
      + \Delta t\,\mathcal{D}_K^{n+\frac12}
      \;=\;
      \Delta t\,\mathcal{P}_K^{n+\frac12} + R_K^{n+\frac12},
    \end{equation}
    where $\mathcal{D}_K^{n+\frac12}$ and $\mathcal{P}_K^{n+\frac12}$
    are suitable time-averaged approximations, and $R_K^{n+\frac12}$
    is a residual term.

  \item[(H3)] \textbf{The consistency and stability of the energy balance.}
    We fix $T>0$ and assume that for any $T<T_{\max}$ there exist
    $K_0\in\mathbb{N}$ and $\Delta t_0>0$ such that for all
    $K\ge K_0$ and $\Delta t\le\Delta t_0$ with $t_n\le T$ we have
    \begin{enumerate}
      \item[(a)] (\emph{uniform energy bound})
        \[
          \sup_{t_n\le T} \mathcal{E}_K^n \;\le\; C_T < \infty,
        \]
        for some constant $C_T$ independent of $K$ and $\Delta t$.
      \item[(b)] (\emph{small accumulated residual})
        \[
          \sum_{t_n\le T} \bigl|R_K^{n+\frac12}\bigr|
          \;\le\; \eta_T(K,\Delta t),
        \]
        where $\eta_T(K,\Delta t)\to 0$ as $K\to\infty$ and
        $\Delta t\to 0$.
      \item[(c)] (\emph{convergence of energy, dissipation, and power})
        \[
          \sup_{t_n\le T}
          \bigl|\mathcal{E}_K^n - \mathcal{E}[u](t_n)\bigr|
          \;\to\; 0,\qquad
          \sup_{t_n\le T}
          \bigl|\mathcal{D}_K^{n+\frac12} - \mathcal{D}[u](t_n)\bigr|
          \;\to\; 0,
        \]
        \[
          \sup_{t_n\le T}
          \bigl|\mathcal{P}_K^{n+\frac12} - \mathcal{P}[u](t_n)\bigr|
          \;\to\; 0
        \]
        as $K\to\infty$ and $\Delta t\to 0$.
      \item[(d)] (\emph{convergence of blow-up diagnostics})
        For any Lipschitz diagnostic $\Phi$ as in corollary~\ref{ass:H3c-diagnostic},
        \[
          \sup_{t_n\le T} |\Phi_{K,\Delta t}^n - \Phi(u(\cdot,t_n))|
          \to 0
        \]
        as $K\to\infty$ and $\Delta t\to0$.
    \end{enumerate}

  \item[(H4)] \textbf{The numerical blowup at a finite time.}
    We define the numerical breakdown time, for each resolution $K$ and time
    step $\Delta t$, by
    \[
      T_{\mathrm{num}}(K,\Delta t)
      := \sup\Bigl\{ t_n:\,
        \text{the discrete solution is well resolved and obeys
        \eqref{eq:disc-energy-balance} with}
      \Bigr.
    \]
    \[
      \Bigl.
        \text{small relative residual up to time } t_n
      \Bigr\},
    \]
    where “small residual” is understood in the sense that the
    accumulated residual
    \(
      \sum_{t_m\le t_n} |R_K^{m+\frac12}|
    \)
    is bounded by a prescribed tolerance $\varepsilon>0$ and the
    energy $\mathcal{E}_K^m$ remains below some large threshold
    $M>0$ for all $t_m\le t_n$. Suppose that there exists a finite
    time $T^*>0$ such that
    \begin{equation}\label{eq:num-blowup}
      \lim_{K\to\infty,\ \Delta t\to 0} T_{\mathrm{num}}(K,\Delta t) = T^*,
      \qquad
      \limsup_{K\to\infty,\ \Delta t\to 0}
      \sup_{t_n < T^*} \mathcal{E}_K^n
      = +\infty.
    \end{equation}
\end{enumerate}

Then the solution $u$ cannot be extended as a smooth solution in $H$
beyond $T^*$.
In particular, under the above assumptions, genuine analytic loss of
regularity at $T^*$ is enforced by the numerical blow-up scenario
\eqref{eq:num-blowup}.
\end{theorem}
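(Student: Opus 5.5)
The argument will go by contradiction. Suppose $u$ extends as a smooth solution in $H$ to some interval $[0,T^*+\tau]$ with $\tau>0$, so $T_{\max}>T^*$. Fix $T\in(T^*,T^*+\tau)$ with $T<T_{\max}$. By (H1), applied on $[0,T]$ where $u$ is smooth, the energy identity holds with equality. In addition, $\mathcal{E}[u]$, $\mathcal{D}[u]$ and $\mathcal{P}[u]$ are continuous, hence bounded, on $[0,T]$. Set $E_T:=\sup_{[0,T]}\mathcal{E}[u]<\infty$.

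\emph{Energy bound on the discrete solutions.} By (H3)(a), there are $K_0$ and $\Delta t_0$ such that $\sup_{t_n\le T}\mathcal{E}_K^n\le C_T$ for all $K\ge K_0$ and $\Delta t\le\Delta t_0$. One can also get this directly from (H3)(c): $\sup_{t_n\le T}|\mathcal{E}_K^n-\mathcal{E}[u](t_n)|\to0$, so for $K$ large and $\Delta t$ small we have $\sup_{t_n\le T}\mathcal{E}_K^n\le E_T+1$. Since $T>T^*$, this gives
\[
\limsup_{K\to\infty,\ \Delta t\to0}\ \sup_{t_n<T^*}\mathcal{E}_K^n\le E_T+1<\infty .
\]
This contradicts the second part of \eqref{eq:num-blowup}. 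That already finishes the argument.

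\emph{Consistency with the breakdown time.} As a check, the first part of \eqref{eq:num-blowup} is also violated. Telescoping \eqref{eq:disc-energy-balance} and using (H3)(b), the accumulated residual up to $T$ is at most $\eta_T(K,\Delta t)\to0$. For $K$ large and $\Delta t$ small it is therefore below the tolerance $\varepsilon$. The energies stay below $M$ provided $M>E_T+1$. Resolution is guaranteed by (H3)(c),(d) and Corollary~\ref{ass:H3c-diagnostic}, since the discrete diagnostics track the finite diagnostics of the smooth solution $u$. Hence $T_{\mathrm{num}}(K,\Delta t)\ge T>T^*$ eventually, which contradicts $T_{\mathrm{num}}\to T^*$.

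\emph{Main obstacle.} The delicate point is that (H3) is stated only for $T<T_{\max}$, and (H4) involves the prescribed threshold $M$ and tolerance $\varepsilon$. If $M$ is fixed and smaller than the exact energy, the breakdown-time route fails. This is why I will use the energy-divergence half of \eqref{eq:num-blowup} as the primary contradiction: it does not depend on $M$ or $\varepsilon$. Everything then rests only on choosing $T\in(T^*,T_{\max})$, which the contradiction hypothesis supplies. Finally, $u$ is smooth on $[0,T^*)$ by maximality, so $T_{\max}\ge T^*$. The contradiction rules out $T_{\max}>T^*$, hence $T_{\max}=T^*$ and $u$ cannot be extended smoothly beyond $T^*$.
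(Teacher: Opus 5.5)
Your proof is correct, but your primary contradiction uses the other half of the blow-up hypothesis from the one the paper uses.

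\textbf{The paper's route.} The paper assumes a smooth extension to $[0,T_1]$ and picks $T\in(T^*,T_1)$. It then uses (H3)(b),(c) to show that, for large $K$ and small $\Delta t$, the discrete energy stays below $2\sup_{[0,T]}\mathcal{E}[u]$ and the accumulated residual stays below $\varepsilon$. From the definition of $T_{\mathrm{num}}$ it concludes $T_{\mathrm{num}}(K,\Delta t)\ge T$ eventually, hence $\liminf T_{\mathrm{num}}\ge T_1>T^*$. This contradicts $T_{\mathrm{num}}\to T^*$. That is essentially your ``consistency check.'' It tacitly needs two things the hypotheses do not state: the threshold $M$ must exceed $2\sup_{[0,T]}\mathcal{E}[u]$, and ``well resolved'' must follow from the convergence hypotheses. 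This is exactly the obstacle you flagged.

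\textbf{Your route.} You play (H3)(a) on some $T\in(T^*,T_{\max})$ against $\limsup_{K\to\infty,\,\Delta t\to0}\sup_{t_n<T^*}\mathcal{E}_K^n=+\infty$. This argument does not depend on $M$, on $\varepsilon$, or on what ``well resolved'' means. In fact it uses nothing beyond $T_{\max}>T^*$, (H3)(a), and the energy-divergence clause of (H4). The energy identity in (H1), (H3)(b)--(d), and the $T_{\mathrm{num}}$ clause are never used.

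\textbf{Comparison.} The paper's route ties the conclusion to the operational breakdown time. It would survive dropping the energy-divergence clause, provided $M$ is taken large. Your route is rigorous under the hypotheses as literally written. It also makes visible that the theorem is nearly immediate once (H3)(a) is assumed for every $T<T_{\max}$.

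\textbf{One correction.} In your final sentence you assert that $u$ is smooth on $[0,T^*)$ ``by maximality,'' so $T_{\max}=T^*$. Nothing in the hypotheses gives $T_{\max}\ge T^*$. Your contradiction only yields $T_{\max}\le T^*$, i.e.\ loss of regularity at or before $T^*$, which is how the paper itself phrases the conclusion. That is all the theorem requires, so drop the equality claim.
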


\begin{proof}
We argue by contradiction. Suppose that $u$ remains a smooth solution
on some interval $[0,T_1]$ with $T_1>T^*$, where $T^*$ is the finite
time in~\eqref{eq:num-blowup}. Then by~H1, the energy functional
$\mathcal{E}[u](t)$, dissipation $\mathcal{D}[u](t)$ and power input
$\mathcal{P}[u](t)$ are finite and continuous on $[0,T_1]$, and the
energy identity holds with equality
\begin{equation}\label{eq:cont-energy-eq}
  \mathcal{E}[u](t_1)
  + \int_{t_0}^{t_1} \mathcal{D}[u](s)\,ds
  \;=\;
  \mathcal{E}[u](t_0)
  + \int_{t_0}^{t_1} \mathcal{P}[u](s)\,ds
\end{equation}
for all $0\le t_0 < t_1 \le T_1$.

We fix any time $T$ with $T^* < T < T_1$. Since $u$ is smooth on
$[0,T]$, the energy and dissipation are uniformly bounded there
\[
  \sup_{0\le t\le T} \mathcal{E}[u](t) < \infty,
  \qquad
  \int_0^T \mathcal{D}[u](s)\,ds < \infty.
\]

By assumption~H3 for this fixed $T$ there exist $K_0\in\mathbb{N}$
and $\Delta t_0>0$ such that H3a-H3c hold on $[0,T]$ for all $K\ge K_0$ and $\Delta t\le\Delta t_0$.

Let us now compare the continuous energy identity
\eqref{eq:cont-energy-eq} with the discrete energy balance
\eqref{eq:disc-energy-balance}. Fix $K\ge K_0$ and $\Delta t\le\Delta t_0$,
and let $t_n=n\Delta t \le T$. Summing
\eqref{eq:disc-energy-balance} from $m=0$ to $m=n-1$ yields
\begin{equation}\label{eq:disc-sum}
  \mathcal{E}_K^n - \mathcal{E}_K^0
  + \Delta t \sum_{m=0}^{n-1} \mathcal{D}_K^{m+\frac12}
  \;=\;
  \Delta t \sum_{m=0}^{n-1} \mathcal{P}_K^{m+\frac12}
  + \sum_{m=0}^{n-1} R_K^{m+\frac12}.
\end{equation}

We apply the continuous energy identity \eqref{eq:cont-energy-eq}
with $t_0=0$ and $t_1=t_n$, and obtain
\begin{equation}\label{eq:cont-sum}
  \mathcal{E}[u](t_n)
  + \int_0^{t_n} \mathcal{D}[u](s)\,ds
  \;=\;
  \mathcal{E}
  + \int_0^{t_n} \mathcal{P}[u](s)\,ds.
\end{equation}

By the convergence assumption~H3c, the discrete energy
$\mathcal{E}_K^n$ and dissipation $\mathcal{D}_K^{m+\frac12}$ converge
uniformly to their continuous counterparts on $[0,T]$ as
$K\to\infty$ and $\Delta t\to 0$. Likewise, $\mathcal{P}_K^{m+\frac12}$
converges to $\mathcal{P}[u](t_m)$ if the power input is treated
consistently this can be included in~H3c without loss of generality.
By~H3b, the accumulated residual is small
\[
  \sum_{t_m\le T} |R_K^{m+\frac12}| \;\le\; \eta_T(K,\Delta t)
  \;\to\; 0
  \quad\text{as } K\to\infty,\ \Delta t\to 0.
\]

Comparing \eqref{eq:disc-sum} and \eqref{eq:cont-sum}, and using the
convergence of the discrete Riemann sums to the continuous integrals,
we obtain
\[
  \sup_{t_n\le T}
  \bigl|\mathcal{E}_K^n - \mathcal{E}[u](t_n)\bigr|
  \;\to\; 0
  \quad\text{as } K\to\infty,\ \Delta t\to 0,
\]
and similarly for the accumulated dissipation and power terms.
In particular, for all sufficiently large $K$ and sufficiently small
$\Delta t$ we have
\begin{enumerate}
  \item $\mathcal{E}_K^n$ remains uniformly bounded on $[0,T]$,
    say $\mathcal{E}_K^n \le 2\sup_{t\le T}\mathcal{E}[u](t)$;
  \item the accumulated residual
    \(
      \sum_{t_m\le t_n} |R_K^{m+\frac12}|
    \)
    remains below any prescribed tolerance $\varepsilon>0$ on
    $[0,T]$.
\end{enumerate}

By the very definition of the numerical breakdown time
$T_{\mathrm{num}}(K,\Delta t)$ in~H4, this implies that
\[
  T_{\mathrm{num}}(K,\Delta t) \;\ge\; T
\]
for all sufficiently large $K$ and sufficiently small $\Delta t$.
Since $T$ was chosen with $T^*<T<T_1$
but otherwise arbitrary, we conclude that
\[
  \liminf_{K\to\infty,\ \Delta t\to 0}
  T_{\mathrm{num}}(K,\Delta t) \;\ge\; T_1
  \;>\; T^*.
\]

However, assumption~\eqref{eq:num-blowup} states that
$T_{\mathrm{num}}(K,\Delta t)\to T^*$ as $K\to\infty$ and $\Delta t\to0$.
This contradicts
\[
  \liminf_{K\to\infty,\ \Delta t\to 0}
  T_{\mathrm{num}}(K,\Delta t)\ge T_1>T^*.
\]
Therefore the assumption that $u$ remains smooth up to some
$T_1>T^*$ must be false.

We conclude that $u$ cannot be extended as a smooth solution beyond
$T^*$ in the energy space $H$. In particular, at or before $T^*$ the
solution must lose the regularity required for the energy identity
with finite $\mathcal{E}[u](t)$ and $\mathcal{D}[u](t)$.
\end{proof}

\begin{remark}[The fully discrete error bounds to the energy framework]
\label{rem:bridge-error-energy}
The fully discrete spectral error estimate Theorem~\ref{thm:full-error-3d}
is conditional it quantifies the numerical error under the assumption
that the underlying Navier-Stokes solution remains smooth, e.g.\ analytic,
on $[0,T]$.

In the subsequent energy framework, one further needs the power input to be
well defined. For a general forcing $f(\cdot,t)\in H^{-1}(\Omega)$, the natural
interpretation is the duality pairing
\[
P_{\rm in}(t)
=
\langle f(\cdot,t),u(\cdot,t)\rangle_{H^{-1},H^1}.
\]
If $f$ and $u$ are more regular, this coincides with the classical integral
\[
P_{\rm in}(t)=\int_{\Omega} u(\cdot,t)\cdot f(\cdot,t)\,dx.
\]
Thus the bridge from the numerical error bounds to the energy framework
requires the well-posedness and integrability of this power input, rather
than any special zero-work structure of the forcing.
\end{remark}

\begin{remark}[Time discretization and residual control]\label{rem:residual}
The abstract hypothesis H3b requires that the accumulated residual in the discrete energy balance tends to zero as $K\to\infty$ and $\Delta t\to 0$.
For a continuous-time Fourier discretization this residual is identically zero, whereas for an explicit time integrator e.g.\ RK4 combined with pseudospectral evaluation, H3b is a consistency requirement that must be verified numerically and can, in principle, be bounded analytically under additional regularity assumptions.
\end{remark}

\subsection{The application to the Navier-Stokes equations}

We illustrate the abstract framework above on the three-dimensional
incompressible Navier-Stokes equations on the periodic box
\begin{equation}\label{eq:NSE}
  \partial_t u + (u\cdot\nabla)u + \nabla p
  = \nu\Delta u + f,\qquad
  \nabla\cdot u = 0,\qquad x\in\Omega=[0,2\pi]^3.
\end{equation}
The natural energy is
\[
  E(t) = \frac12\|u(\cdot,t)\|_{L^2(\Omega)}^2,
\]
the dissipation is
\[
  D(t) = \nu\|\nabla u(\cdot,t)\|_{L^2(\Omega)}^2,
\]
and the power input is
\[
  P_{\mathrm{in}}(t)
  = \langle f(\cdot,t),u(\cdot,t)\rangle_{H^{-1},H^1}.
\]
If $f$ and $u$ are sufficiently regular, this coincides with the classical integral
\[
  P_{\mathrm{in}}(t)
  = \int_\Omega u(x,t)\cdot f(x,t)\,dx.
\]
The classical energy inequality reads
\[
  E(t_1) + \int_{t_0}^{t_1} D(s)\,ds
  \le E(t_0) + \int_{t_0}^{t_1} P_{\mathrm{in}}(s)\,ds,
\]
with equality for smooth solutions.

Let $u_K^n$ be spectral approximations to \eqref{eq:NSE},
with associated discrete energy $E_K^n$, dissipation $D_K^{n+1/2}$,
and input $P_{\mathrm{in},K}^{n+1/2}$, satisfying an energy-consistent
discrete scheme of the form \eqref{eq:disc-energy-balance}.

\begin{lemma}[The verification of corollary~\ref{ass:H3c-diagnostic} for standard diagnostics]
\label{lem:verify-diagnostic}
We fix any $T<T_1$. We assume the numerical approximation satisfies the uniform $L^\infty$
convergence on $[0,T]$
\begin{equation}\label{eq:Linf_u_conv}
\max_{t_n\in[0,T]}\|u_{K,\Delta t}^n-u(\cdot,t_n)\|_{L^\infty}\to 0
\quad\text{as } K\to\infty,\ \Delta t\to 0,
\end{equation}
and likewise for vorticity,
\begin{equation}\label{eq:Linf_om_conv}
\max_{t_n\in[0,T]}\|\omega_{K,\Delta t}^n-\omega(\cdot,t_n)\|_{L^\infty}\to 0,
\qquad \omega=\nabla\times u.
\end{equation}
Then corollary~\ref{ass:H3c-diagnostic} holds for the diagnostics
\[
\Phi(u(\cdot,t))=\|u(\cdot,t)\|_{L^\infty},\qquad
\Phi(u(\cdot,t))=\|\omega(\cdot,t)\|_{L^\infty},
\]
and for the BKM-type integral diagnostic
\[
\Phi(u(\cdot,t))=\int_0^t \|\omega(\cdot,s)\|_{L^\infty}\,ds,\qquad
\Phi_{K,\Delta t}^n=\Delta t\sum_{j=0}^{n-1}\|\omega_{K,\Delta t}^j\|_{L^\infty}.
\]
In each case, $\Phi_{K,\Delta t}^n\to \Phi(u(\cdot,t_n))$ uniformly on $[0,T]$.
\end{lemma}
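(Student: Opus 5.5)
The three diagnostics are handled separately. In each case the goal is a bound on $|\Phi_{K,\Delta t}^n-\Phi(u(\cdot,t_n))|$, uniform over $t_n\le T$, by a quantity that tends to zero under \eqref{eq:Linf_u_conv}–\eqref{eq:Linf_om_conv}.

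\textbf{The two pointwise diagnostics.} For $\Phi(v)=\|v\|_{L^\infty}$, the reverse triangle inequality gives $\bigl|\|v\|_{L^\infty}-\|w\|_{L^\infty}\bigr|\le\|v-w\|_{L^\infty}$. So $\Phi$ is Lipschitz with constant $L=1$, and Corollary~\ref{ass:H3c-diagnostic} applies directly with \eqref{eq:Linf_u_conv} in place of the $H^s$ route. Likewise, the reverse triangle inequality applied to $\omega$ gives
\[
\max_{t_n\le T}\Bigl|\|\omega_{K,\Delta t}^n\|_{L^\infty}-\|\omega(\cdot,t_n)\|_{L^\infty}\Bigr|\le\max_{t_n\le T}\|\omega_{K,\Delta t}^n-\omega(\cdot,t_n)\|_{L^\infty},
\]
and the right-hand side tends to zero by \eqref{eq:Linf_om_conv}. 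This is Lipschitz dependence on the vorticity in the sense of the assumption on vorticity-based diagnostics.

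\textbf{The BKM integral.} I split the error into a discretization part and a quadrature part:
\[
\Bigl|\Delta t\sum_{j=0}^{n-1}\|\omega_{K,\Delta t}^j\|_{L^\infty}-\int_0^{t_n}\|\omega(s)\|_{L^\infty}ds\Bigr|
\le \Delta t\sum_{j=0}^{n-1}\bigl|\|\omega_{K,\Delta t}^j\|_{L^\infty}-\|\omega(t_j)\|_{L^\infty}\bigr|
+\Bigl|\Delta t\sum_{j=0}^{n-1}\|\omega(t_j)\|_{L^\infty}-\int_0^{t_n}\|\omega(s)\|_{L^\infty}ds\Bigr|.
\]
The first term is at most $T\max_{t_j\le T}\|\omega_{K,\Delta t}^j-\omega(t_j)\|_{L^\infty}$, which tends to zero by \eqref{eq:Linf_om_conv}. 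The second term is a left Riemann sum error.

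\textbf{Main obstacle: the quadrature error.} Controlling the Riemann sum error needs continuity of $g(s):=\|\omega(\cdot,s)\|_{L^\infty}$ on $[0,T]$. On $[0,T]$ with $T<T_1$ the solution is smooth; concretely, under Assumption~\ref{ass:a} the Fourier coefficients decay exponentially uniformly in $t$. Hence $\omega\in C([0,T];L^\infty)$, as a uniformly convergent series of continuous terms, so $g$ is uniformly continuous with some modulus $m_g$. The quadrature error is then at most $T\,m_g(\Delta t)\to0$, uniformly in $n$. If $\partial_t\omega$ is bounded in $L^\infty$, one even gets the rate $O(T\Delta t)$.

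Combining the two terms gives uniform convergence on $[0,T]$. This integral diagnostic is not a Lipschitz function of the instantaneous state, so it does not fit Corollary~\ref{ass:H3c-diagnostic} literally. I would therefore state that the conclusion of Corollary~\ref{ass:H3c-diagnostic}, namely uniform convergence, holds for it, which is what the lemma asserts.
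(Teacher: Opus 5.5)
Your proposal is correct and follows the paper's proof essentially step for step: the reverse triangle inequality handles $\|u\|_{L^\infty}$ and $\|\omega\|_{L^\infty}$, and the BKM integral is split into a discretization term bounded by $T\max_{t_j\le T}\|\omega_{K,\Delta t}^j-\omega(\cdot,t_j)\|_{L^\infty}$ plus a left Riemann-sum error, controlled by uniform continuity of $\|\omega(\cdot,s)\|_{L^\infty}$ on $[0,T]$. Your justification of $\omega\in C([0,T];L^\infty)$ via the uniform exponential Fourier decay, and your remark that the BKM diagnostic is not literally a Lipschitz function of the instantaneous state, are sound refinements of points the paper only asserts.
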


\begin{proof}
We use for $\Phi=\|u\|_{L^\infty}$
\[
\big|\|u_{K,\Delta t}^n\|_{L^\infty}-\|u(\cdot,t_n)\|_{L^\infty}\big|
\le \|u_{K,\Delta t}^n-u(\cdot,t_n)\|_{L^\infty},
\]
hence uniform convergence follows from \eqref{eq:Linf_u_conv}.
The case $\Phi=\|\omega\|_{L^\infty}$ is identical using \eqref{eq:Linf_om_conv}.

We define $f(t)=\|\omega(\cdot,t)\|_{L^\infty}$ for the BKM integral,.
Since $u$ is smooth indeed analytic on $[0,T]$, we have
$\omega\in C([0,T];L^\infty)$, hence $f$ is continuous and therefore uniformly
continuous on $[0,T]$. For any $t_n\in[0,T]$,
\begin{eqnarray*}
\left|\Delta t\sum_{j=0}^{n-1}\|\omega_{K,\Delta t}^j\|_{L^\infty}
-\int_0^{t_n} f(s)\,ds\right|
&\le
\Delta t\sum_{j=0}^{n-1}
\left|\|\omega_{K,\Delta t}^j\|_{L^\infty}-f(t_j)\right|
\\&\quad+
\left|\Delta t\sum_{j=0}^{n-1}f(t_j)-\int_0^{t_n}f(s)\,ds\right|
\\&=: I+II.
\end{eqnarray*}
We have $|\|a\|-\|b\||\le \|a-b\|$,
\[
I\le \Delta t\sum_{j=0}^{n-1}\|\omega_{K,\Delta t}^j-\omega(\cdot,t_j)\|_{L^\infty}
\le T\,\max_{t_j\in[0,T]}\|\omega_{K,\Delta t}^j-\omega(\cdot,t_j)\|_{L^\infty}\to 0
\]
by \eqref{eq:Linf_om_conv}. The $II$, uniform continuity of $f$ implies that the
left Riemann sums converge uniformly to the integral on $[0,T]$ as $\Delta t\to 0$.
Therefore the BKM diagnostic converges uniformly as claimed.
\end{proof}

\begin{theorem}[Energy-based conditional blowup criterion for Navier-Stokes]
\label{thm:NS-blowup-energy}
Consider the incompressible Navier--Stokes system \eqref{eq:NSE} on $\Omega$.
Let $u_{K,\Delta t}^n$ be an energy-consistent fully discrete spectral approximation
satisfying the discrete energy balance \eqref{eq:disc-energy-balance}.

We assume the forcing satisfies
\[
  f(\cdot,t)\in H^{-1}(\Omega)
  \qquad\text{for a.e. } t\in[0,T),
\]
and that the power input
\[
  P_{\mathrm{in}}(t)
  :=
  \langle f(\cdot,t),u(\cdot,t)\rangle_{H^{-1},H^1}
\]
belongs to $L^1(0,T)$ on every compact time interval on which the solution is smooth.

We assume that the hypotheses H1, H2, H3a-H3c, and H4 of
Theorem~\ref{thm:energy-based} hold for this Navier-Stokes system, with
\[
  E(t)=\frac12\|u(\cdot,t)\|_{L^2(\Omega)}^2,
  \qquad
  D(t)=\nu\|\nabla u(\cdot,t)\|_{L^2(\Omega)}^2,
\]
and
\[
  P_{\mathrm{in}}(t)
  =
  \langle f(\cdot,t),u(\cdot,t)\rangle_{H^{-1},H^1}.
\]
In particular, assume that there exists a finite time $T^*>0$ such that the numerical
breakdown scenario H4 holds, namely
\[
  T_{\mathrm{num}}(K,\Delta t)\to T^*
  \qquad\text{as } K\to\infty,\ \Delta t\to0,
\]
together with the associated energy residual breakdown condition from
Theorem~\ref{thm:energy-based}.

Then the exact Navier--Stokes solution cannot be extended as a smooth solution
beyond $t=T^*$. In particular, it loses regularity at or before $T^*$.
\end{theorem}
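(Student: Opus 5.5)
The plan is to obtain Theorem~\ref{thm:NS-blowup-energy} as a direct specialization of Theorem~\ref{thm:energy-based}. We take $H=L^2_\sigma(\Omega)$, the divergence-free $L^2$ fields, and set
\[
\mathcal E[u]=E,\qquad \mathcal D[u]=D,\qquad \mathcal P[u]=P_{\mathrm{in}}.
\]
Once we check that H1 holds for this triple, the remaining hypotheses H2, H3a--H3c and H4 are assumed in the statement, and the contradiction argument of Theorem~\ref{thm:energy-based} applies verbatim.

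\textbf{Step 1: verify H1 for Navier--Stokes.}
\begin{itemize}
\item The energy inequality for Leray--Hopf solutions is classical (Leray). The only new ingredient is the forcing term.
\item By Lemma~\ref{lem:force-closure-zero} and Remark~\ref{rem:closure}, $P_{\mathrm{in}}(t)=\langle f,u\rangle_{H^{-1},H^1}$ is well defined and in $L^1$ on any compact interval where $u\in H^1$. This is exactly the integrability hypothesis placed on $P_{\mathrm{in}}$ in the statement.
\item On an interval $[t_0,t_1]$ where $u$ is smooth, test the equation against $u$. The pressure drops out by incompressibility, and the convective term vanishes because $b(u,u,u)=0$ (the antisymmetry used in Lemma~\ref{error}).
\item Integrating in time then gives the energy equality
\[
E(t_1)+\int_{t_0}^{t_1}D\,ds=E(t_0)+\int_{t_0}^{t_1}P_{\mathrm{in}}\,ds .
\]
\item Continuity of $E$ and $D$ follows from $u\in C([t_0,t_1];H^1)$. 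Continuity of $P_{\mathrm{in}}$ follows if $f\in C([t_0,t_1];H^{-1})$. If we only have $f\in L^1(H^{-1})$, we use the weaker fact that $P_{\mathrm{in}}\in L^1$, which is all that is needed for the integrated identity. This is the one place where the abstract hypothesis has to be read in integrated form.
\end{itemize}

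\textbf{Step 2: handle H3d.} Theorem~\ref{thm:NS-blowup-energy} omits H3d from its assumptions. Inspecting the proof of Theorem~\ref{thm:energy-based}, only H3a--H3c and H4 are actually used to derive the contradiction. The diagnostics in H3d are interpretive only; Lemma~\ref{lem:verify-diagnostic} supplies them if the $L^\infty$ convergence holds. So their absence costs nothing.

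\textbf{Step 3: run the contradiction argument.} Suppose $u$ is smooth on $[0,T_1]$ with $T_1>T^*$, and fix $T\in(T^*,T_1)$.
\begin{itemize}
\item Sum the discrete balance \eqref{eq:disc-energy-balance} over steps.
\item Use H3c to convert the Riemann sums of $D_K$ and $P_{\mathrm{in},K}$ into the corresponding integrals. Use H3b to discard the residual.
\item This yields $E_K^n\le 2\sup_{[0,T]}E$ and an accumulated residual below $\varepsilon$ for all large $K$ and small $\Delta t$.
\item By the definition of $T_{\mathrm{num}}$ in H4, this gives $T_{\mathrm{num}}(K,\Delta t)\ge T>T^*$, contradicting $T_{\mathrm{num}}\to T^*$.
\end{itemize}
Hence smoothness fails at or before $T^*$.

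\textbf{Main obstacle.} The delicate step is the Riemann-sum passage when $P_{\mathrm{in}}$ is merely $L^1$ rather than continuous. Uniform convergence $P^{n+1/2}_{\mathrm{in},K}\to P_{\mathrm{in}}(t_n)$ does not by itself give $\Delta t\sum_m P_{\mathrm{in}}(t_m)\to\int_0^{t_n}P_{\mathrm{in}}\,ds$. I would first try to argue that smoothness of $u$ on $[0,T]$, together with the forcing regularity implicit in H3c, makes $P_{\mathrm{in}}$ continuous. Failing that, I would reinterpret $P^{n+1/2}_{\mathrm{in},K}$ as the cell average $\frac{1}{\Delta t}\int_{t_n}^{t_{n+1}}P_{\mathrm{in}}$, for which the sum converges to the integral trivially.
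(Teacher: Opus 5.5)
Your proposal is correct and follows the paper's route. The paper proves this theorem simply by invoking Theorem~\ref{thm:energy-based} with $\mathcal{E}=E$, $\mathcal{D}=D$, $\mathcal{P}=P_{\mathrm{in}}$, and your Step~3 is exactly that theorem's contradiction argument. Note, however, that H1 is itself one of the assumed hypotheses, so Step~1 is not needed, and H1's clause that $\mathcal{P}$ is continuous on intervals where $u$ is smooth removes your ``main obstacle''. In fact the Riemann-sum comparison can be dropped altogether: H3c already gives $\sup_{t_n\le T}|E_K^n-E(t_n)|\to 0$, and H3b gives the accumulated-residual bound, and these are precisely the energy and residual conditions in the definition of $T_{\mathrm{num}}$.
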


\begin{proof}
The argument follows directly from Theorem~\ref{thm:energy-based},
using the identifications
\[
  E(t)=\frac12\|u(\cdot,t)\|_{L^2(\Omega)}^2,
  \qquad
  D(t)=\nu\|\nabla u(\cdot,t)\|_{L^2(\Omega)}^2,
\]
and
\[
  P_{\mathrm{in}}(t)
  =
  \langle f(\cdot,t),u(\cdot,t)\rangle_{H^{-1},H^1}.
\]
The assumptions ensure that the forcing is compatible with the energy identity, and the remaining hypotheses are exactly those of Theorem~\ref{thm:energy-based}. Therefore the conclusion follows immediately.
\end{proof}

\section{Conclusions}
We have presented a computation guided study of a forced three-dimensional incompressible Navier-Stokes flow. The dynamics are resolved by Fourier spectral method together with fourth-order Runge Kutta time stepping, and we developed an accompanying analytical framework to quantify spectral accuracy and link numerical observations to conditional regularity criteria.

This generates a highly localized event in which several diagnostics exhibit simultaneous rapid growth. The maximum velocity and kinetic energy increase by many orders of magnitude, the vorticity supremum and the BKM type integral grow steeply and remain monotone on the final time interval. The energy and dissipation remain consistent with the expected balance laws up to the breakdown time, and the blowup time inferred from different diagnostics is sharply localized in physical and renormalized time.

We established an abstract framework for fully discrete spectral schemes under analyticity strip assumptions, proving exponential convergence in space and algebraic convergence in time, and formulating an energy based conditional blowup criterion. This framework shows that, provided the spectral approximations remain energy consistent and well resolved up to a limiting numerical breakdown time, a resolution independent numerical blowup scenario forces loss of regularity of any putative smooth solution at or before that time.

Taken together, these elements provide a picture, the simulations exhibit behavior consistent with a finite time singularity in the incompressible Navier-Stokes equations, in a setting where the numerics admit an a posteriori energy based control. The evidence remains numerical and conditional rather than fully rigorous, in particular, it relies on analyticity strip assumptions and energy consistent discretizations.

\bibliographystyle{elsarticle-num} 
\bibliography{rk}
\end{document}